\newtheorem{definition}{Definition}
\newtheorem{proposition}{Proposition}
\newtheorem{conjecture}{Conjecture}
\def\qed{\begin{flushright} $\Box$ \end{flushright}}
\def\Dbar{\leavevmode\lower.6ex\hbox to 0pt
{\hskip-.23ex\accent"16\hss}D}
\def\bZ{{\mbox{\bf Z}}}
\def\AF{{\mbox{\rm AF}}}
\def\NAF{{\mbox{\rm NAF}}}
\def\PAF{{\mbox{\rm PAF}}}
\def\GP{{\mbox{\rm GP}}}
\def\NCTP{{\mbox{\rm NCTP}}}
\def\PCTP{{\mbox{\rm PCTP}}}
\def\NGP{{\mbox{\rm NGP}}}
\def\PGP{{\mbox{\rm PGP}}}
\def\Dic{{\mbox{\rm Dic}}}
\def\GF{{\mbox{\rm GF}}}
\def\tr{{\mbox{\rm tr}}}
\def\squareforqed{\hbox{\rlap{$\sqcap$}$\sqcup$}}
\def\qed{\ifmmode\squareforqed\else{\unskip\nobreak\hfil
\penalty50\hskip1em\null\nobreak\hfil\squareforqed
\parfillskip=0pt\finalhyphendemerits=0\endgraf}\fi}
\def\endenv{\ifmmode\;\else{\unskip\nobreak\hfil
\penalty50\hskip1em\null\nobreak\hfil\;
\parfillskip=0pt\finalhyphendemerits=0\endgraf}\fi}
\newenvironment{proof}{\noindent \textbf{{Proof.~} }}{\qed}
\begin{document}

\title
{Negaperiodic Golay pairs and Hadamard matrices}

\author {{Nickolay A. Balonin}
\footnote{ Saint-Petersburg State University of Aerospace Instrumentation, 67, B. Morskaia St., 190000,
Saint-Petersburg, Russian Federation.
email {korbendfs@mail.ru}} ~and
{ Dragomir {\v{Z}. \Dbar}okovi{\'c}}
\footnote{ University of Waterloo, Department of Pure Mathematics and Institute for Quantum Computing, 
Waterloo, Ontario, N2L 3G1, Canada.
e-mail {djokovic@uwaterloo.ca} } }

\date{ \today }

\maketitle

\begin{abstract}
Apart from the ordinary and the periodic Golay pairs, we 
define also the negaperiodic Golay pairs. (They occurred first,  
under a different name, in a paper of Ito.) 
If a Hadamard matrix is also a Toeplitz matrix, we show that it 
must be either cyclic or negacyclic. We investigate the construction of Hadamard (and weighing matrices) from two negacyclic blocks (2N-type). 
The Hadamard matrices of 2N-type are equivalent to negaperiodic 
Golay pairs. We show that the Turyn multiplication of Golay pairs 
extends to a more general multiplication: one can 
multiply Golay pairs of length $g$ and negaperiodic Golay pairs of length $v$ to obtain negaperiodic Golay pairs of length $gv$.
We show that the Ito's conjecture about Hadamard matrices 
is equivalent to the conjecture that negaperiodic Golay pairs 
exist for all even lengths.
\end{abstract}


\section{Introduction}

The Golay pairs (abbreviated as G-pairs, and also known as Golay 
sequences) have been introduced in a note of M. Golay \cite{Golay:1961} published in 1961. Since then they have been studied 
by many reseachers and used in various combinatorial constructions, in particular for the construction of Hadamard 
matrices \cite{SebYam:1992} and \cite[Chapter 23]{deLF:2011}.

The periodic Golay pairs (PG-pairs) made their first appearance, 
under a different name, in a note of the second author 
\cite{Djokovic:DCC:1998} published in 1998. They are equivalent 
to Hadamard matrices built from two circulant blocks (2C-type). 
It is now known that periodic Golay pairs exist for infinitely 
many lengths for which no ordinary Golay pairs are known 
\cite{DK:ADTHM:2015}.

In this paper we complete the picture by defining the 
negaperiodic Golay pairs (NG-pairs). These pairs are 
equivalent to Hadamard matrices built from two negacyclic blocks (2N-type). The NG-pairs were first introduced by N. Ito, under the name of ``associated pairs'', 
in his paper \cite{Ito-IV} published in 2000. 
An intereseting observation is that the ordinary Golay pairs 
are precisely the pairs which are both PG and NG-pairs.

In an earlier paper \cite{Ito-III} Ito proposed a conjecture 
which is stronger than the famous Hadamard conjecture.
It turns out that his conjecture is equivalent to the assertion 
that the NG-pairs exist for all even lengths. This is drastically different from the known facts about ordinary and periodic Golay pairs. Examples of NG-pairs of even length $\le 92$ are listed in \cite{Ito-IV}. As far as we know, no NG-pairs of length 94 have been constructed. 

In section \ref{sec:Toeplitz} 
we show that if a Hadamard matrix is also a Toeplitz matrix, then 
it must be cyclic or negacyclic. As cyclic Hadamard matrices beyond order 4 are not likely to exist, we conjecture that the same holds true for negacyclic Hadamard matrices beyond order 2. 
We have verified the latter conjecture for orders $\le 40$. 
As a substitute for Ito's conjecture we propose the weaker conjecture in which the two negacyclic blocks are replaced by Toeplitz matrices. 

In section \ref{sec:Golay} we define negaperiodic autocorrelation function (NAF) and negaperiodic Golay pairs (NG-pairs). These are 
binary sequences of the same length $v$ whose $\NAF$s add up to 
zero. The length $v$ must be an even integer or 1. 
For the sake of comparisson we recall some facts 
about ordinary and periodic Golay pairs. We show that the Turyn multiplication of G-pairs 
extends to give a multiplication of G-pairs and NG-pairs. 
More precisely, one can multiply G-pairs of length $g$ and 
NG-pairs of length $v$ to obtain NG-pairs of length $gv$.
In particular, one can double the length of any NG-pair.
We also define a natural equivalence relation for NG-pairs.

In section \ref{sec:RDS} we introduce a natural bijection 
$\Phi_v$ from the set of binary sequences of length $v$ onto the set of $v$-subsets of $\bZ_{2v}$. We recall the definition of 
the relative difference families in the cyclic group $\bZ_{2v}$ with respect to the subgroup of order 2. We show that a pair 
of binary sequences of length $v$ is an NG-pair if and only if 
the $\Phi_v$-images of these sequences form a relative 
difference family in $\bZ_{2v}$. We also show that Ito's 
conjecture, which entails the Hadamard matrix conjecture, is 
equivalent to the assertion that NG-pairs exist for all 
even lengths $v$.

There are only a few known infinite series of NG-pairs. 
In sections \ref{sec:Paley-C}, \ref{sec:Paley-1} and 
\ref{sec:Paley-2} we treat two of them, the first and second 
Paley series. First we recall the definition of Paley 
conference matrices (C-matrices). They have order $1+q$ where 
$q$ is an odd prime power. Those for $q\equiv 1 \pmod{4}$ give 
rise to the first Paley series of NG-pairs, with length 
$1+q$. Those for $q\equiv 3 \pmod{4}$ give 
rise to the second Paley series of NG-pairs, with length 
$(1+q)/2$. The main facts that we use are that all Paley 
C-matrices of the same order are equivalent and that each 
of these equivalence classes contains a negacyclic C-matrix.

In section \ref{sec:Ito} we recall that Ito constructed in 
\cite{Ito-III} an infinite series of relative difference sets in 
dicyclic groups (see section \ref{sec:Ito} for the definition).
Hence, this gives an infinite series of NG-pairs to which we 
refer as the Ito series. However, we show that the Ito series is 
contained in the second Paley series.

In section \ref{sec:Williamson} we recall from 
\cite[Corollary 2.3]{Schmidt:1999} the fact that the existence of Ito relative difference sets in the dicyclic group of order $8m$ 
is equivalent to the existence of four generalized Williamson 
matrices of order $m$. We coined the name ``quasi-Williamson 
matrices'' for this type of generalized Williamson matrices. 
The four quasi-Williamson matrices have to be circulants but 
not necessarily symmetric. However, it is required that when 
plugged into the Williamson array they give a Hadamard matrix 
of order $4m$. The known series of four Williamson matrices of 
odd order give rise to the series of NG-pairs. 
As an example, we have computed four quasi-Williamson 
matrices of order 35. It is not known whether quasi-Williamson 
matrices of order 47 exist, and we pose this as an open problem.

In section \ref{sec:WeighingM} we apply NG-pairs to the 
construction of weighing matrices of 2N-type.
For small lengths $v$ we list in the appendices 
\ref{app:A},\ref{app:C} and \ref{app:D} the NG-pairs of the 
first and second Paley series and the Ito series, respectively.

\section
{Block-Toeplitz Hadamard matrices} \label{sec:Toeplitz}

We say that a square matrix $A=[a_{ij}]$, $i,j=0,1,...,v-1$, is 
a Toeplitz matrix if $a_{i,j}=a_{i-1,j-1}$ for $i,j>0$. 
In particular, we will be interested in two classes 
of Toeplitz matrices: cyclic (also known as circulant) 
and negacyclic. The cyclic and negacyclic matrices 
of order $v$ are polynomials in the cyclic and negacyclic 
shift matrix $P$ and $N$, respectively:

\begin{equation} \label{mat-P-N}
P=\left[ \begin{array}{cccccc}
0 & 1 & 0 & \cdots & 0 & 0 \\
0 & 0 & 1 &        & 0 & 0 \\
0 & 0 & 0 &        & 0 & 0 \\
\vdots &  &        &   &   \\
0 & 0 & 0 &        & 0 & 1 \\
1 & 0 & 0 &        & 0 & 0
\end{array} \right],\quad
N=\left[ \begin{array}{cccccc}
0 & 1 & 0 & \cdots & 0 & 0 \\
0 & 0 & 1 &        & 0 & 0 \\
0 & 0 & 0 &        & 0 & 0 \\
\vdots &  &        &   &   \\
0 & 0 & 0 &        & 0 & 1 \\
-1 & 0 & 0 &        & 0 & 0
\end{array} \right].
\end{equation}

\begin{definition} 
A {\em k-Toeplitz matrix} is a square matrix $A$ 
partitioned into square blocks $A_{ij}$, $i,j=1,2,...,k$ 
such that each block $A_{ij}$ is a Toeplitz matrix. 
As a special case $(k=1)$, a square Toeplitz matrix is 
1-Toeplitz. A {\em block-Toeplitz matrix} is a square matrix 
which is k-Toeplitz for some $k$. If each block of 
a k-Toeplitz matrix is cyclic (resp. negacyclic) we 
say that it is {\em k-cyclic} (resp. {\em k-negacyclic}).
We abbreviate ``k-Toeplitz'', ``k-cyclic'', ``k-negacyclic'' with kT, kC, kN, respectively.
\end{definition} 

The k-cyclic Hadamard matrices for $k=1,2,4,8$ have been 
studied extensively 
\cite{BD:2015,Golay:1961,Ryser:1963,Turyn:1974,SebYam:1992}. 
The k-negacyclic ones also have appeared in the literature 
but to much lesser extent \cite{DGS,Ito-IV}. 
In this article we are interested mostly in 
kT-type Hadamard and weighing matrices with $k=1,2,4$. 

For $k=1$ it turns out that Toeplitz Hadamard matrices are 
necessarily cyclic or negacyclic.

\begin{proposition}
If $H=[h_{ij}]$ is a Toeplitz Hadamard matrix of order 
$v \equiv 0 \pmod{4}$, then $H$ is cyclic or negacyclic.
\end{proposition}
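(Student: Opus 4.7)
The plan is to parameterize the Toeplitz matrix by a single bi-infinite sequence and then exploit orthogonality of rows to force a strong symmetry between the ``top row'' and the ``first column'' parts of the data. Concretely, I would write $h_{ij} = a_{j-i}$ where $a_{-(v-1)}, \ldots, a_{v-1} \in \{\pm 1\}$, and split this data into $p_k := a_k$ for $0 \le k \le v-1$ (top row) and $q_k := a_{-k}$ for $1 \le k \le v-1$ (first column below the diagonal). In these terms, cyclicity means $q_k = p_{v-k}$ for every $k$, and negacyclicity means $q_k = -p_{v-k}$ for every $k$; so the goal is to show that either all signs $q_k / p_{v-k}$ are $+1$ or all are $-1$.

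Next I would turn the Hadamard condition $HH^T = vI$ into a family of ``windowed'' correlations. For $m \ge 1$, the $(i, i+m)$ entry of $HH^T$ is
\[
T(m, i) := \sum_{l = -i}^{v-1-i} a_l\, a_{l-m},
\]
and it vanishes for each $0 \le i \le v-1-m$. Taking consecutive $i$'s (telescoping) yields, for all $1 \le i \le v-1-m$,
\[
0 \;=\; T(m, i) - T(m, i-1) \;=\; a_{-i}\, a_{-i-m} \;-\; a_{v-i}\, a_{v-i-m}.
\]
Translating via $p$ and $q$ and relabelling $k = i$, $k' = i+m$, this reads
\[
q_k q_{k'} \;=\; p_{v-k}\, p_{v-k'} \qquad \text{for all } 1 \le k < k' \le v-1.
\]

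Finally, I would introduce the sign ratios $r_k := q_k\, p_{v-k} \in \{\pm 1\}$ for $1 \le k \le v-1$. Multiplying the displayed identity by $p_{v-k} p_{v-k'}$ (which equals its own inverse) yields $r_k r_{k'} = (p_{v-k} p_{v-k'})^2 = 1$, so $r_1 = r_2 = \cdots = r_{v-1}$. The common value is $+1$ in the cyclic case and $-1$ in the negacyclic case, completing the proof. The hypothesis $v \equiv 0 \pmod{4}$ enters only through $v \ge 4$, which ensures that the index set $\{1, \ldots, v-1\}$ has at least two elements so the ratios are forced to be equal. The one subtlety worth checking carefully is purely bookkeeping: that the range $1 \le i \le v-1-m$ on which the telescoped identity is valid is large enough to cover every pair $(k, k')$ with $1 \le k < k' \le v-1$, which is immediate from the substitution $k = i$, $k' = i+m$.
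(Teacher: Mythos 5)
Your proposal is correct and takes essentially the same route as the paper: your telescoped identity $T(m,i)-T(m,i-1)=a_{-i}a_{-i-m}-a_{v-i}a_{v-i-m}=0$ is, in the case $i=1$, exactly the paper's comparison $h_0\cdot h_{j-1}=h_1\cdot h_j$, which yields $h_{0,v-1}h_{0,v-j}=h_{1,0}h_{j,0}$, and your sign-ratio argument is the paper's two-case analysis of $h_{1,0}=\pm h_{0,v-1}$. (The pairs $(1,k')$ alone already force all $r_k$ to be equal, so your extra generality over all $i$ is harmless but not needed.)
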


\begin{proof} Let $h_i$ be the $(i+1)$th row of $H$, 
$h_i=[h_{i,0},h_{i,1},...,h_{i,v-1}]$. As the rows of $H$
are orthogonal to each other, all dot products of two 
different rows are 0, $h_i \cdot h_j =0$ for $i<j$. 
Let $j\in\{2,3,\ldots,v-1\}$. Then the equality 
$h_0\cdot h_{j-1}=h_1\cdot h_{j}$ simplifies and, by using 
the hypothesis that $H$ is a Toeplitz matrix, we deduce 
that 

\begin{equation} \label{jed-h}
h_{0,v-1} h_{0,v-j} = h_{1,0} h_{j,0},
\quad j=2,3,...,v-1.
\end{equation}

Since the entries of $H$ belong to $\{+1,-1\}$, we have two cases:
$h_{1,0}=h_{0,v-1}$ and $h_{1,0}= -h_{0,v-1}$.

In the former case, from the equations (\ref{jed-h}) we deduce 
that the equality $h_{j,0}=h_{0,v-j}$ holds for all 
$j=1,2,...,v-1$. This means 
that the matrix $H$ is cyclic. Similarly, in the latter case
one can show that $H$ is negacyclic.
\end{proof}

There is a conjecture, attributed to Ryser 
\cite[p. 134]{Ryser:1963}, 
that there exist no cyclic Hadamard matrices of order $>4$. 
We conjecture that the negacyclic analog holds.

\begin{conjecture} \label{neg-conj}
There are no negacyclic Hadamard matrices of order $>2$.
\end{conjecture}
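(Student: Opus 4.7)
Since the statement is a conjecture and not yet a theorem, what follows is a strategy rather than a completed proof. The first step is to translate the condition into algebra. Any negacyclic matrix $H$ of order $v$ with first row $(a_0,\ldots,a_{v-1})$ can be written as $H=p(N)$, with $p(x)=\sum_{i=0}^{v-1}a_ix^i$, and since $N^v=-I$ the algebra generated by $N$ is $\bZ[x]/(x^v+1)$. A direct computation shows $N^T=-N^{v-1}$, so transposition corresponds to $x\mapsto x^{-1}$, and the Hadamard identity $HH^T=vI$ becomes
\[
p(x)\,p(x^{-1}) \equiv v \pmod{x^v+1},
\]
equivalently $|p(\zeta)|^2=v$ at every root $\zeta$ of $x^v+1$, subject to the rigid constraint $a_i\in\{\pm 1\}$. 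Equivalently, the single sequence $(a_0,\ldots,a_{v-1})$ must have $\NAF(s)=0$ for all $s\ne 0$. The standard restriction on Hadamard orders reduces the interesting range to $v\equiv 0\pmod 4$.

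The main line of attack would be algebraic number theory in the cyclotomic fields arising from the factorization $x^v+1=\prod_d \Phi_d(x)$ (taken over $d\mid 2v$ with $d\nmid v$). For each such $d$, the image $p(\zeta_d)\in\bZ[\zeta_d]$ has $|p(\zeta_d)|=\sqrt{v}$ at every embedding, so $N_{\bQ(\zeta_d)/\bQ}(p(\zeta_d))=\pm v^{\phi(d)/2}$. This forces the prime-ideal factorization of $p(\zeta_d)$ to lie above the rational prime divisors of $v$, bringing into play the self-conjugacy, multiplier, and field-descent tools developed for the non-existence theory of Barker sequences, relative difference sets, and Ryser's circulant Hadamard conjecture. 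The goal would then be to combine these constraints with the sharp rigidity $a_i\in\{\pm 1\}$ to extract a contradiction uniformly in $v$.

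For the finite verification up to $v=40$ that the authors cite, a backtracking or SAT search on $\{\pm 1\}^v$ should suffice: one prunes by the antisymmetry $\NAF(s)=-\NAF(v-s)$, by partial row dot-products, and by checking $|p(\zeta)|^2\equiv v\pmod{\ell}$ for small primes $\ell$. The principal obstacle is that this conjecture is of the same flavour as, and at least as difficult as, the Ryser and Barker conjectures: field-norm and multiplier methods dispose of many individual $v$, but no uniform argument is known to rule out a single $\{\pm 1\}$-sequence achieving $|p(\zeta)|^2=v$ simultaneously at every $\zeta$ with $\zeta^v=-1$. A complete proof would almost certainly require a genuinely new ingredient — perhaps a sharp inequality forcing some $|p(\zeta)|^2$ strictly below $v$ once $v>2$, or a structural obstruction refining the Turyn-- and Schmidt-type results on (relative) difference sets that underpin the second Paley and Ito constructions.
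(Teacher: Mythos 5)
This statement is a conjecture, and the paper offers no proof of it — only the remark that it has been verified by computer for orders $\le 40$ — so your decision to present a strategy plus a finite-verification scheme rather than a purported proof is the correct reading, and your algebraic reformulation (a negacyclic Hadamard matrix of order $v$ is exactly a single $\pm1$ sequence with $\NAF(s)=0$ for $s\ne 0$, i.e. $p(x)p(x^{-1})\equiv v \bmod (x^v+1)$) is consistent with the paper's identity $AA^T=\sum_k \NAF_a(k)N^k$. Your proposed pruned search over $\{\pm1\}^v$ is essentially what the authors report having done, so the two treatments coincide in substance.
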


By using a computer we have verified this conjecture for 
orders $\le40$.

For $k=2$ we shall focus on two special classes of 
kT-Hadamard matrices, namely the 2C and 2N-Hadamard matrices having the form

\begin{equation} \label{mat-H}
H = \left[ \begin{array}{cc} 
A & B \\ -B^T & A^T  \end{array} \right].                             
\end{equation}

From now on we refer to 2T, 2C and 2N-matrices having the form 
(\ref{mat-H}) as {\em matrices of 2T-type, 2C-type and 2N-type}, 
respectively. 

We propose the following conjecture.

\begin{conjecture} \label{hip-1}
For each even integer $v>0$ there exists a Hadamard matrix 
of 2T-type and order $2v$.
\end{conjecture}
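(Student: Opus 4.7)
The plan is to combine all known infinite series of 2C-type and 2N-type Hadamard matrices (each of which is automatically of 2T-type) and then try to plug the remaining gaps using genuinely Toeplitz blocks. First I would observe that the conjecture for a particular even $v$ follows at once from the existence of either a PG-pair or an NG-pair of length $v$. So I would begin by tabulating the even lengths already covered by the paper's constructions: the two Paley series of Sections \ref{sec:Paley-1} and \ref{sec:Paley-2}, the Ito series of Section \ref{sec:Ito}, the quasi-Williamson series of Section \ref{sec:Williamson}, the known families of periodic Golay pairs, and everything obtained from these by the Turyn multiplication and the extended multiplication proved later in the paper. This should settle an infinite (but non-cofinite) set of even lengths.

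For each remaining even $v$, I would exploit the additional freedom of Toeplitz (not necessarily cyclic or negacyclic) blocks. For $H$ of the form (\ref{mat-H}), the orthogonality condition $HH^T=2vI_{2v}$ reduces to
\begin{equation}
AA^T+BB^T = 2v\, I_v, \qquad AB = BA.
\end{equation}
Since a Toeplitz matrix of order $v$ is determined by the $2v-1$ entries of its first row and first column, the pair $(A,B)$ carries $4v-2$ free $\pm 1$ parameters, roughly twice what is available for the 2C-type or 2N-type cases. I would transcribe these two matrix equations into a system of quadratic conditions on the two generating sequences, and then (a) perform an exhaustive or heuristic computer search for small $v$, and (b) look for structural patterns suggesting new infinite families that are Toeplitz but neither cyclic nor negacyclic, for example by allowing the generating sequences to arise as concatenations or ``glueings'' of shorter cyclic and negacyclic blocks.

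The main obstacle is that this conjecture, while strictly weaker than Ito's, still implies the Hadamard matrix conjecture for every order $2v$ with $v$ even, and therefore cannot be settled unconditionally without a breakthrough on long-standing open problems. The step that will certainly resist direct attack is existence for all sufficiently large $v$: probabilistic arguments on $\pm 1$ matrix constructions tend to fail because of congruential obstructions on the autocorrelation sums, and the orthogonality conditions on Toeplitz blocks appear to be nearly as rigid as those on negacyclic blocks. A realistic partial result would be to prove Conjecture \ref{hip-1} for all $v\le V_0$ (by combining the known series with a finite computer verification for the remaining small cases), and to identify at least one new infinite family of genuinely 2T-type Hadamard matrices that is not already subsumed by the 2C and 2N constructions.
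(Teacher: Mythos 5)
The statement you were asked to prove is Conjecture \ref{hip-1} of the paper: the authors state it as an open conjecture and offer no proof, so there is no argument of theirs to compare yours against. Your submission is, appropriately, a research programme rather than a proof, and your framing of the difficulty is essentially correct: since every order divisible by $4$ can be written as $2v$ with $v$ even, Conjecture \ref{hip-1} implies the full Hadamard matrix conjecture, so no unconditional proof is to be expected. Your opening observation is also sound and matches the paper's logic: a PG-pair or NG-pair of length $v$ yields a 2C- or 2N-type (hence 2T-type) Hadamard matrix of order $2v$, which is exactly why the paper positions \ref{hip-1} as a weakening of the Ito-equivalent Conjecture \ref{hip-2} (equivalently \ref{hip-3}).

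One technical correction to your reduction of the orthogonality conditions. For $H$ of the form (\ref{mat-H}), computing $HH^T$ blockwise gives \emph{three} conditions: $AA^T+BB^T=2vI_v$, $A^TA+B^TB=2vI_v$, and $AB=BA$. For cyclic or negacyclic blocks the second condition follows from the first and the third is automatic (both blocks are polynomials in $P$ or $N$), but for general Toeplitz blocks neither reduction holds, so your system as written is incomplete and would admit spurious solutions. With that fixed, your proposed partial results (finite verification for $v\le V_0$, and the search for genuinely 2T families not subsumed by the 2C and 2N constructions) are reasonable goals, but be aware that even the union of all series catalogued in the paper leaves infinitely many even lengths uncovered --- the list (\ref{losi-2}) already exhibits gaps below $250$ --- so the "plug the remaining gaps" step is precisely where the open problem lives, not a finite cleanup.
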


We shall see in section \ref{sec:RDS} that the stronger 
conjecture below is equivalent to the Ito's conjecture about Hadamard matrices 
(see \cite{ACP:2001,Ito-III,Schmidt:1999,SchmidtTan:2014}).

\begin{conjecture} \label{hip-2}
For each even integer $v>0$ there exists a Hadamard matrix 
of 2N-type and order $2v$.
\end{conjecture}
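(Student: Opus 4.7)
The plan is to reduce the conjecture to the existence of negaperiodic Golay pairs of length $v$ and then to try to build such pairs for every even $v$ by combining the known infinite families with the multiplication constructions advertised in the abstract. By the equivalence to be proved in Section \ref{sec:RDS}, a Hadamard matrix of 2N-type and order $2v$ exists if and only if an NG-pair of length $v$ does, so it suffices to exhibit one NG-pair for each even $v$.

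I would assemble seed NG-pairs from three sources treated later in the paper: the first Paley series gives length $1+q$ for every odd prime power $q\equiv 1 \pmod{4}$; the second Paley series gives length $(1+q)/2$ for $q\equiv 3 \pmod{4}$ (and contains the Ito series as a subseries); and the quasi-Williamson construction of Section \ref{sec:Williamson} gives length $4m$ whenever four generalized Williamson matrices of order $m$ exist. To propagate these seeds, I would iterate the generalized Turyn multiplication: given an ordinary Golay pair of length $g$ and an NG-pair of length $v$, it yields an NG-pair of length $gv$. The special case $g=2$ doubles the length of any existing NG-pair, so every seed $v$ generates the whole geometric sequence $v,2v,4v,8v,\ldots$ for free, and multiplication by further known Golay lengths ($g=10,26,\ldots$) fills in many more even values.

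The main obstacle, and the reason the statement is phrased as a conjecture rather than a theorem, is that even the union of all these constructions is only a \emph{sparse} subset of the even integers, whereas the assertion demands \emph{every} even integer. None of the closure operations decrease length, so any gap — for instance $v=94$, which the introduction explicitly flags as unconstructed — requires a genuinely new idea. Two plausible avenues for closing such gaps would be a second multiplication theorem combining two NG-pairs directly (rather than a Golay pair with an NG-pair), or a non-constructive existence argument for the relative difference families in $\bZ_{2v}$ characterized in Section \ref{sec:RDS}. Since this conjecture is equivalent to Ito's conjecture and strictly stronger than the classical Hadamard matrix conjecture, I expect the true barrier to be this density gap rather than any single exceptional case.
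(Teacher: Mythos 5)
Your proposal correctly recognizes that this statement is a conjecture which the paper does not (and cannot, with its tools) prove: the reduction to NG-pairs of length $v$ via Proposition \ref{NG:HM}, the seed families (first and second Paley series, with the Ito series contained in the second), the Turyn-type multiplication $\GP_g\times\NGP_v\to\NGP_{gv}$ with doubling as the case $g=2$, and the identification of $v=94$ and the sparsity of the resulting set of lengths as the genuine obstruction are all exactly the paper's own position --- indeed the authors go further and suggest the conjecture is likely false. One small slip: the quasi-Williamson matrices of odd order $m$ yield NG-pairs of length $2m$ (equivalently, Hadamard matrices of order $4m$), not NG-pairs of length $4m$ as your seed list states.
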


\section{Three kinds of Golay pairs} \label{sec:Golay}

Let $a=(a_0,a_1,\ldots,a_{v-1})$ be a sequence of integers 
of length $v$. If each $a_i\in\{\pm1\}$ then we say that the 
sequence is {\em binary}. If we allow the sequence to have 
also 0s, then we say that it is {\em ternary}. One defines 
similarly the binary and ternary matrices. 
We shall consider $a$ also as a row-vector. 

There are three kinds of autocorrelation functions that 
we attach to an arbitrary sequence $a$: the ordinary or nonperiodic (AF), the periodic (PAF), and negaperiodic (NAF) autocorrelation functions. They are defined by the formulas

\begin{eqnarray} \label{AF}
\AF_a(k) &=& \sum_{i=0}^{v-k-1} a_i a_{i+k}, \quad k\in\bZ, \\
 \label{PAF}
\PAF_a(k) &=& a\cdot aP^k, \quad k\in\bZ, \\
 \label{NAF}
\NAF_a(k) &=& a\cdot aN^k, \quad k\in\bZ,
\end{eqnarray}
where ``$\cdot$'' is the dot product. 
In (\ref{AF}) we use the convention that $a_i=0$ if 
$i<0$ or $i\ge v$.

Note that for $0\le k<v$ we have
\begin{eqnarray} \label{PAFa}
\PAF_a(k) &=& \AF_a(k)+\AF_a(v-k) \\
 \label{NAFa}
\NAF_a(k) &=& \AF_a(k)-\AF_a(v-k).
\end{eqnarray}

The {\em cyclic shift} and the {\em negacyclic shift} of $a$ 
are given explicitly by
$aP=(a_{v-1},a_0,a_1,\ldots,a_{v-2})$ and 
$aN=(-a_{v-1},a_0,a_1,\ldots,a_{v-2})$, respectively.

Since $N^v=-I$, we have $\NAF_a(k+v)=-\NAF_a(k)$ for all $k$.
It follows immediately from (\ref{NAFa}) that
\begin{equation} \label{NAF-spec}
\NAF_a(v-k)= -\NAF_a(k), \quad 0\le k<v.
\end{equation}
In particular, if $v$ is even then $\NAF_a(v/2)=0$. 
We also mention that $a$, its reverse sequence and the 
negashifted sequence $aN$ all have the same $\NAF$.

If $A$ is the negacyclic matrix with first row $a$, then
$A=\sum_{i=0}^{v-1} a_i N^i$. Further, 
$A^T$ is negacyclic with first row 
$(a_0,-a_{v-1},-a_{v-2},\dots,-a_1)$ and we have
\begin{equation} \label{jed-AAtr}
AA^T = \sum_{k=0}^{v-1} \NAF_a(k) N^k.
\end{equation}
(Similar properties are valid for cyclic matrices.)

Let us define three kinds of complementarity:
\begin{definition} \label{compl}
The integer sequences $a^{(1)},a^{(2)},\ldots,a^{(t)}$, each of 
length $v$, are 

(i) {\em complementary} if $\sum_{i=1}^t \AF_{a^{(i)}}(k)=0$ 
for $k\ne0$;

(ii) {\em P-complementary} if $\sum_{i=1}^t \PAF_{a^{(i)}}(k)=0$ 
for $0<k<v$;

(iii) {\em N-complementary} if $\sum_{i=1}^t \NAF_{a^{(i)}}=0$ for $0<k<v$.
\end{definition}

We now define three kinds of Golay pairs.
\begin{definition} \label{Golay}
A {\em Golay pair (G-pair)}, {\em periodic Golay pair (PG-pair)}, 
{\em negaperiodic Golay pair (NG-pair)} of length $v$ is 
a pair $(a,b)$ of binary sequences of length $v$ which are 
complementary, P-complementary, N-complementary, 
respectively.
We denote by $\GP_v$, $\PGP_v$ and $\NGP_v$ the set of Golay, 
periodic Golay and negaperiodic Golay pairs of length $v$, 
respectively.
\end{definition}

For instance, the pair 
$a=(1,-1,-1,1,-1,-1)$, $b=(1,-1,-1,-1,-1,1)$ is an
NG-pair. It is well known that $\GP_v=\PGP_v=\emptyset$ 
when $v$ is odd and $v>1$. We shall see later that this 
is also true for $\NGP_v$.

The equations (\ref{PAFa}) and (\ref{NAFa}) 
imply that for each $v>0$ we have 
$\GP_v=\PGP_v \cap \NGP_v$.

For the definition of equivalence of G-pairs and of PG-pairs 
see e.g. \cite{Djokovic:DiscrMath:1998} and \cite{BD:2015}, 
respectively. To define the equivalence of NG-pairs 
$(a,b)$ of even length $v$, we introduce the elementary 
transformations which preserve the set of such pairs:\\

(i) reverse $a$ or $b$;

(ii) replace $a$ with $aN$ or $b$ with $bN$;

(iii) switch $a$ and $b$.

(iv) for $k$ relatively prime to $v$, replace 
$a$ and $b$ with the sequences
$(z_ia_{ki \pmod{v}})_{i=0}^{v-1}$ and 
$(z_ib_{ki \pmod{v}})_{i=0}^{v-1}$ respectively, 
where $z_i=1$ if $ki \pmod{2v}<v$ and $z_i=-1$ otherwise.

(v) replace $a_i$ and $b_i$ with $-a_i$ and $-b_i$, 
respectively, for each odd index $i$. \\

We say that two NG-pairs of the same length are 
{\em equivalent} if one can be transformed to the other 
by a finite sequence of elementary transformations.

As an example, we claim that the NG-pairs $(a,b)$ and 
$(c,d)$ of length 10 
\begin{eqnarray*}
&& a=(+,-,-,-,-,+,-,-,-,-),~ b=(+,-,-,+,-,+,-,+,+,-); \\
&& c=(+,-,+,-,+,+,+,-,+,-),~ d=(+,-,-,+,-,+,-,+,+,-);
\end{eqnarray*}
taken from the Appendices C and D, respectively, are equivalent.
(We write $+$ and $-$ for $1$ and $-1$, respectively.)
By applying to $(c,d)$ the elementary transformation (iv) with $k=9$, we obtain the pair $(a,d')$ where 
$d'=(+,-,+,-,-,+,+,-,-,+).$ 
After reversing $d'$ and applying 
the negacyclic shifts, we can transform $d'$ to $b$. 
This proves our claim.

Ito \cite{Ito-IV} gives a list of NG-pairs of length $v=2t$ for 
all odd integers $t\le45$. He also points out that no NG-pair of 
length $94$ is known. Apparently this assertion remains still 
valid.

For lengths $v\le40$, the number of equivalence classes in 
$\GP_v$ and their representatives are known (see e.g. 
\cite{Djokovic:DiscrMath:1998}). Very recently, such 
classification has been carried out in \cite{BD:2015} 
for $\PGP_v$ with $v\le40$.

It is a well-known fact that there is a bijection from  
$\PGP_v$ to the set of 2C-Hadamard matrices of order $2v$.
The image of $(a,b)\in\PGP_v$ is the matrix (\ref{mat-H}) 
in which $a$ and $b$ are the first rows of the circulants
$A$ and $B$. The following is an NG-analog of that result.

\begin{proposition} \label{NG:HM}
If $(a,b)$ is an NG-pair of length $v$ then the matrix 
(\ref{mat-H}), where $A$ and $B$ are the negacyclic blocks with the first rows $a$ and $b$ respectively, is a 2N-type Hadamard matrix of order $2v$. Moreover, this map is a bijection.
\end{proposition}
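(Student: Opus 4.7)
The plan is to verify $H H^T = 2v I$ by a block computation and then reverse the calculation to obtain bijectivity. Writing
\[
H H^T = \left[ \begin{array}{cc} A & B \\ -B^T & A^T \end{array} \right] \left[ \begin{array}{cc} A^T & -B \\ B^T & A \end{array} \right] = \left[ \begin{array}{cc} AA^T + BB^T & -AB + BA \\ A^T B^T - B^T A^T & A^T A + B^T B \end{array} \right],
\]
I would analyze each of the four blocks separately, using the fact that all negacyclic matrices of order $v$ are polynomials in $N$ and therefore commute with each other, and that $A^T$ and $B^T$ are themselves negacyclic (as noted in the excerpt just before equation (\ref{jed-AAtr})).

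For the off-diagonal blocks, commutativity immediately gives $-AB + BA = 0$ and $A^T B^T - B^T A^T = 0$, so both vanish. For the top-left block I would apply the identity (\ref{jed-AAtr}) to both $A$ and $B$, obtaining
\[
A A^T + B B^T = \sum_{k=0}^{v-1} \bigl( \NAF_a(k) + \NAF_b(k) \bigr) N^k.
\]
Because $a, b$ are binary of length $v$ we have $\NAF_a(0) = \NAF_b(0) = v$, and the N-complementarity of $(a,b)$ kills every term with $1 \le k \le v-1$, leaving $AA^T + BB^T = vI$. The bottom-right block $A^T A + B^T B$ equals $AA^T + BB^T$ by commutativity (every negacyclic matrix commutes with its transpose), so it also equals $vI$. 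Hence $H H^T = 2vI$, and since $H$ has $\pm 1$ entries it is a Hadamard matrix of 2N-type.

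For the bijection, the map $(a,b) \mapsto H$ is clearly well-defined into 2N-type Hadamard matrices and is injective, since $a$ and $b$ are the first rows of the negacyclic blocks $A$ and $B$ and therefore determine them completely. For surjectivity, any 2N-type Hadamard matrix has, by definition, the shape (\ref{mat-H}) with $A, B$ negacyclic; letting $a, b$ denote their first rows, the block computation above is reversible: the Hadamard condition $HH^T = 2vI$ forces the coefficient of each $N^k$ with $1 \le k \le v-1$ in $AA^T + BB^T$ to vanish, which by (\ref{jed-AAtr}) is precisely the N-complementarity of $(a,b)$. Since $a, b$ are binary of length $v$, this means $(a,b) \in \NGP_v$, proving surjectivity.

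There is no real obstacle here; the only subtlety is checking that the off-diagonal blocks actually vanish, which rests entirely on the commutativity of negacyclic matrices (they lie in the commutative subalgebra $\bR[N] \cong \bR[x]/(x^v+1)$). Once that observation is in place, the rest is a direct translation of definition~\ref{compl}(iii) into block-matrix language via (\ref{jed-AAtr}).
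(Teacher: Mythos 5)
Your proof is correct and follows essentially the same route as the paper, which simply invokes (\ref{jed-AAtr}) for the diagonal blocks and states that the converse holds; you have merely filled in the block computation and the commutativity of negacyclic matrices (all being polynomials in $N$). One small arithmetic slip: since $\NAF_a(0)+\NAF_b(0)=v+v=2v$, the diagonal blocks equal $2vI$, not $vI$, which is what your final (correct) conclusion $HH^T=2vI$ actually requires.
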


\begin{proof}
The formula (\ref{jed-AAtr}) implies that if $(a,b)\in\NGP_v$,  then the matrix (\ref{mat-H}) is a 2N-type Hadamard matrix. 
The converse also holds.
\end{proof}

In view of this proposition we can restate Conjecture 
\ref{hip-2} as follows:

\begin{conjecture} \label{hip-3}
$\NGP_v\ne\emptyset$ for all even $v>0$. 
\end{conjecture}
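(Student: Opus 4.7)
The plan is to build NG-pairs for every even $v$ by stitching together the constructions developed in the paper, and then to identify precisely where the currently available toolkit runs out. First I would lay down base cases from the two Paley series of sections \ref{sec:Paley-1} and \ref{sec:Paley-2}: for every prime power $q \equiv 1 \pmod{4}$ we obtain $\NGP_{1+q}\ne\emptyset$ from the first Paley series, and for every prime power $q \equiv 3 \pmod{4}$ we obtain $\NGP_{(1+q)/2}\ne\emptyset$ from the second Paley series. By Dirichlet on primes in arithmetic progressions, these together realize a set of even lengths of positive density.

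Next I would propagate these base lengths using the extended multiplication theorem from section \ref{sec:Golay}: if a G-pair of length $g$ exists and $\NGP_v\ne\emptyset$, then $\NGP_{gv}\ne\emptyset$. Specialising to the classical Golay lengths $g\in\{2,10,26,\ldots\}$ and their products, every base length is multiplied by an arbitrary element of $\{2^a 10^b 26^c\}$. The $g=2$ case alone (doubling) reduces the problem to even $v=2w$ with $w$ odd, so the remaining task is to cover the odd-part lengths.

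For odd $w$ I would invoke section \ref{sec:Williamson}: an NG-pair of length $4m$ is equivalent to four quasi-Williamson matrices of order $m$ plugged into the Williamson array, and such matrices need only be circulant (not symmetric). I would therefore systematically exploit the known infinite series producing circulant $\pm1$ matrices whose autocorrelations sum appropriately, including the cyclotomic Williamson constructions, Turyn-type arrays, and the quasi-Williamson quadruples exhibited in the paper for small orders. Combined with Proposition \ref{NG:HM}, each such quadruple feeds back into the list of settled even $v$.

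The hard part is precisely that last step: producing quasi-Williamson matrices (or an alternative construction) for \emph{every} odd $m$. Since, by the equivalence to be established in section \ref{sec:RDS}, Conjecture \ref{hip-3} is equivalent to Ito's conjecture and therefore implies the Hadamard matrix conjecture, any complete argument must close every odd $m$ uniformly, and the paper itself flags the smallest stubborn case: $v=94$, i.e.\ $m=47$, for which no quasi-Williamson quadruple is known. I expect the outlined strategy to settle an infinite but incomplete family of even $v$, and a genuinely new construction or reduction, beyond the Paley, Turyn-multiplication, and Williamson-type ingredients now available, will be required to eliminate the residual sporadic gaps.
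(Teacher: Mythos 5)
The statement you are asked to prove is a \emph{conjecture}, and the paper offers no proof of it: Conjecture \ref{hip-3} is simply a restatement of Conjecture \ref{hip-2} via Proposition \ref{NG:HM}, shown in section \ref{sec:RDS} to be equivalent to Ito's conjecture, which in turn implies the Hadamard matrix conjecture. Indeed, the authors explicitly write in section \ref{sec:Williamson} that, because the known infinite series of NG-pairs are so sparse, they believe Ito's conjecture is likely to be \emph{false}. So no complete argument exists to compare yours against, and your own conclusion --- that the outlined strategy settles only an infinite but incomplete family of even $v$ and that a genuinely new idea is required --- is exactly the honest state of affairs. Your inventory of the available tools matches the paper's: the two Paley series, the extension of the Turyn multiplication $\GP_g\times\NGP_v\to\NGP_{gv}$ (whose $g=2$ case reduces the problem to $v\equiv 2\pmod 4$, as the paper notes after (\ref{GPxNGP})), and the correspondence with quasi-Williamson matrices. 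The irreducible gap is the one you name: there is no construction covering every odd $t$, and the smallest open case is $t=47$, i.e.\ $v=94$.

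One factual slip to correct: quasi-Williamson matrices of odd order $m$ correspond to NG-pairs of length $2m$, not $4m$ (section \ref{sec:Williamson} constructs an NG-pair of length $v=2t$ from a quasi-Williamson quadruple of order $t$, and conversely). Your own instantiation $v=94$, $m=47$ already uses the correct relation $v=2m$, so this is an internal inconsistency in your write-up rather than a flaw in the strategy. Also, be careful with the phrase ``positive density'': the lengths realized by the Paley series together with multiplication by Golay numbers $2^a10^b26^c$ form an infinite but density-zero subset of the even integers among the residue class $v\equiv 2\pmod 4$ that actually matters, which is precisely why the list (\ref{losi-2})--(\ref{losi-3}) of unresolved small cases is nonempty.
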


Let us recall (see \cite{DK:ADTHM:2015}) that there are two 
non-equivalent multiplications 
\begin{equation} \label{GPxPGP}
\GP_g \times \PGP_v \rightarrow \PGP_{gv}.
\end{equation}
Interestingly, these two multiplications extend (by using the same formulas) to two multiplications 
\begin{equation} \label{GPxNGP}
\GP_g \times \NGP_v \rightarrow \NGP_{gv}.
\end{equation}

Consequently, in order to prove Conjecture \ref{hip-3}, it 
suffices to consider the case when $v\equiv2\pmod{4}$.

We can generalize the multiplications (\ref{GPxPGP}) and 
(\ref{GPxNGP}) by replacing PG-pairs and NG-pairs with the periodic complementary ternary (PCT) and negaperiodic complementary ternary (NCT) pairs, respectively.
We denote by $\PCTP_{v,w}$ and $\NCTP_{v,w}$ the set of 
PCT-pairs and NCT-pairs of length $v$ and total weight $w$, respectively. (The weight is the number of nonzero terms.)

\begin{proposition} \label{stav:NCTP}
The Turyn multiplication of Golay pairs (see \cite{Turyn:1974}) extends to maps
\begin{eqnarray} \label{GPxPCTP}
&& \GP_g \times \PCTP_{v,w} \rightarrow \PCTP_{gv,gw}, \\
 \label{GPxNCTP}
&& \GP_g \times \NCTP_{v,w} \rightarrow \NCTP_{gv,gw}.
\end{eqnarray}
\end{proposition}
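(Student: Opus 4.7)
The plan is to apply the Turyn multiplication formula verbatim to a G-pair $(X,Y)\in\GP_g$ and a PCT-pair (resp.\ NCT-pair) $(a,b)$, and to check that the resulting pair lies in $\PCTP_{gv,gw}$ (resp.\ $\NCTP_{gv,gw}$). The key point is that the verification of Turyn's identity rests on a bilinear formula in the autocorrelation functions of the two factor pairs, in which the binarity of the second factor is never used; the argument then parallels the proof of \eqref{GPxPGP} given in \cite{DK:ADTHM:2015}.

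Concretely, from $(X,Y)\in\GP_g$ I form the ternary sequences $X'=(X+Y)/2$ and $Y'=(X-Y)/2$; these have entries in $\{-1,0,1\}$ and disjoint supports partitioning $\{0,\ldots,g-1\}$, because $X,Y$ are binary. For a ternary pair $(a,b)$ of length $v$, I define $(U,V)$ of length $gv$ by the standard Turyn interleaving rule, e.g.\ $U_{iv+j}=X'_ia_j+Y'_ib^R_j$ and $V_{iv+j}=X'_ib^R_j-Y'_ia_j$, where $b^R$ denotes the reverse of $b$. Since exactly one of $X'_i,Y'_i$ is nonzero at each $i$, both $U$ and $V$ are ternary, and a direct count yields $\mathrm{wt}(U)+\mathrm{wt}(V)=g(\mathrm{wt}(a)+\mathrm{wt}(b))=gw$, giving the asserted length and weight.

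For the P-complementary statement \eqref{GPxPCTP}, I would expand $\PAF_U(k)+\PAF_V(k)$ through the interleaving, writing a generic shift as $k=sv+t$ with $0\le t<v$ and separating the contributions with $j+t<v$ (no inner wrap) from those with $j+t\ge v$ (inner wrap). After regrouping, the expression factors as a bilinear sum of $\PAF_X(\ell)+\PAF_Y(\ell)$ (outer) and $\PAF_a(\cdot)+\PAF_b(\cdot)$ (inner), summed over appropriate shift indices. The outer factor vanishes for nonzero $\ell$ by the G-pair property (which implies $\PAF_X+\PAF_Y=0$), and the inner factor vanishes for nonzero argument by the P-complementarity of $(a,b)$, giving $\PAF_U(k)+\PAF_V(k)=0$ for $0<k<gv$. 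Since this argument nowhere uses that $a,b$ are binary, it applies verbatim to PCT-pairs.

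The N-complementary statement \eqref{GPxNCTP} follows by running the same computation with $\NAF$ in place of $\PAF$ throughout. The outer factor becomes $\NAF_X(\ell)+\NAF_Y(\ell)$, which vanishes for $\ell\ne 0$: since $(X,Y)$ is a G-pair one has $\AF_X(\ell)+\AF_Y(\ell)=0$ for $\ell\ne 0$, and \eqref{NAFa} then gives $\NAF_X+\NAF_Y=0$. The inner factor $\NAF_a+\NAF_b$ vanishes by the N-complementarity of $(a,b)$. The main obstacle is the sign bookkeeping: the negacyclic shift $N_{gv}^k$ on the composite sequence must be decomposed, under the interleaving, into an action on $\bZ_g\times\bZ_v$ whose signs --- combined with the reversal built into $b^R$ --- reproduce exactly a negacyclic shift on the inner coordinate. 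Once this identification is checked, the bilinear factorisation yields $\NAF_U(k)+\NAF_V(k)=0$ for $0<k<gv$, so $(U,V)\in\NCTP_{gv,gw}$, completing the proof.
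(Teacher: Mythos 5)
Your interleaving runs in the wrong direction, and this is fatal for the negaperiodic claim \eqref{GPxNCTP}. With $U_{iv+j}=X'_ia_j+Y'_ib^R_j$ the sequence $U$ is a concatenation of $g$ consecutive blocks of length $v$, each a signed copy of $a$ or of $b^R$; in generating\-/function form $U(z)=X'(z^v)a(z)+Y'(z^v)b^R(z)$. Even after the reversal pattern is adjusted so that the cross terms cancel (in Turyn's formula one of the two outputs must carry the reversed $a$, not $b^R$ again), the best one can get from the Golay identity $X(z)X(z^{-1})+Y(z)Y(z^{-1})=2g$ is the exact Laurent\-/polynomial identity
$$
U(z)U(z^{-1})+V(z)V(z^{-1})=g\bigl(a(z)a(z^{-1})+b(z)b(z^{-1})\bigr).
$$
For $(U,V)$ to be N\-/complementary of length $gv$ the right\-/hand side must be constant modulo $z^{gv}+1$, but the NCT hypothesis only makes $a(z)a(z^{-1})+b(z)b(z^{-1})$ constant modulo $z^{v}+1$, and $z^v+1$ does not divide $z^{gv}+1$ when $g$ is even --- which is the only nontrivial case, since $\GP_g=\emptyset$ for odd $g>1$. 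Concretely, for $0<k<v$ and $g\ge2$ a shift by $k$ never wraps around the single negacyclic seam at position $gv$, so $\NAF_U(k)+\NAF_V(k)=g(\AF_a(k)+\AF_b(k))$ with no compensating $-\AF(v-k)$ term; for the length\-/6 NG\-/pair $a=(1,-1,-1,1,-1,-1)$, $b=(1,-1,-1,-1,-1,1)$ in the paper one has $\AF_a(2)+\AF_b(2)=-2\ne0$. This is exactly the ``sign bookkeeping'' you defer at the end: a negacyclic shift of the long sequence does not induce a negacyclic shift on each inner block, and no identification can make it do so.

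The paper's proof uses the opposite arrangement, formulas \eqref{jed:T1}--\eqref{jed:T2}: the NCT symbols are spread out with stride $g$ (they enter as $c(z^g)$ and $d(z^{-g})z^{gv-g}$), while the Golay pair occupies consecutive positions. Then $e(z)e(z^{-1})+f(z)f(z^{-1})=g\bigl(c(z^g)c(z^{-g})+d(z^g)d(z^{-g})\bigr)$, and the substitution $z\mapsto z^g$ carries the congruence modulo $z^v+1$ into one modulo $z^{gv}+1$ exactly, for every $g$. The moral is that the factor whose complementarity is only periodic or negaperiodic must be the one whose variable is raised to the power $g$, whereas the Golay pair, whose defining identity holds exactly as a Laurent polynomial, may sit in the unsubstituted slot. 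Your version would in fact establish the periodic statement \eqref{GPxPCTP}, because $z^v-1$ divides $z^{gv}-1$ for all $g$, and your ternarity and weight counts are correct; but the negaperiodic half of the proposition requires the paper's interleaving.
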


\begin{proof}
The two proofs are essentially the same and we give the proof 
only for the case of NCT-pairs. (This proof is similar to the 
proof of \cite[Proposition 3]{DK:ADTHM:2015}.) 
Given an integer sequence $a=(a_0,a_1,\ldots,a_{v-1})$, we shall  represent it by the polynomial 
$a(z)=a_0+a_1z+\cdots+a_{v-1}z^{v-1}$ in the variable $z$.
The Turyn multiplication $(a,b)\cdot(c,d)=(e,f)$, where 
$(a,b)\in\GP_g$ and $(c,d)\in\GP_v$, is given by the formulas 
\begin{eqnarray} \label{jed:T1}
e(z) &=& \frac{1}{2}(a(z)+b(z))c(z^g)+
\frac{1}{2}(a(z)-b(z))d(z^{-g})z^{gv-g}, \\ \label{jed:T2}
f(z) &=& \frac{1}{2}(b(z)-a(z))c(z^{-g})z^{gv-g}+
\frac{1}{2}(a(z)+b(z))d(z^{g}). 
\end{eqnarray}
The product $(e,f)\in\GP_{gv}$.

Now let us assume that $(c,d)\in\NCTP_{v,w}$. We define the integer sequences $e$ and $f$ of length $gv$ by the same 
formulas (\ref{jed:T1}) and (\ref{jed:T2}), respectively. 
It is easy to see that $e$ and $f$ are ternary sequences. 
Since $(a,b)\in\GP_g$ we have
\begin{equation} \label{jed:Turyn}
a(z)a(z^{-1})+b(z)b(z^{-1})=2g.
\end{equation}
Since $(c,d)\in\NCTP_{v,w}$ we have
\begin{equation} \label{jed:cong}
c(z)c(z)^*+d(z)d(z)^* \equiv w~~ \text{mod } (z^v+1).
\end{equation}
This is an identity in the quotient ring $\bZ[z]/(z^v+1)$, 
which is equipped with the involution ``$*$'' sending 
$z$ to $z^{-1}$. A computation shows that
\begin{eqnarray*} 
4e(z)e(z^{-1})
&=& (a(z)+b(z))(a(z^{-1})+b(z^{-1}))c(z^g)c(z^{-g})+ \\
&& (a(z)-b(z))(a(z^{-1})-b(z^{-1}))d(z^g)d(z^{-g})+ \\
&& (a(z)+b(z))(a(z^{-1})-b(z^{-1}))c(z^g)d(z^g)z^{g-gv}+ \\
&& (a(z)-b(z))(a(z^{-1})+b(z^{-1}))c(z^{-g})d(z^{-g})z^{gv-g}, \\
4f(z)f(z^{-1})
&=& (a(z)-b(z))(a(z^{-1})-b(z^{-1}))c(z^g)c(z^{-g})+ \\
&& (a(z)+b(z))(a(z^{-1})+b(z^{-1}))d(z^g)d(z^{-g})+ \\
&& (b(z)-a(z))(a(z^{-1})+b(z^{-1}))c(z^{-g})d(z^{-g})z^{gv-g}+ \\
&& (a(z)+b(z))(b(z^{-1})-a(z^{-1}))c(z^g)d(z^g)z^{g-gv}.
\end{eqnarray*}
By using (\ref{jed:Turyn}) we obtain that
$$
e(z)e(z^{-1})+f(z)f(z^{-1})=
g(c(z^g)c(z^{-g})+d(z^g)d(z^{-g})).
$$
It follows from (\ref{jed:cong}) that
$$
c(z^g)c(z^{-g})+d(z^g)d(z^{-g})\equiv w~~ \text{mod } (z^{gv}+1)
$$
and so we have
$$
e(z)e(z^{-1})+f(z)f(z^{-1}) \equiv gw~~ \text{mod } (z^{gv}+1).
$$
We conclude that $(e,f)\in\NCTP_{gv,gw}$.
\end{proof}

In the special case when $g=2$ and $(a,b)=((+,-),(+,+))$ 
we obtain a map $\NCTP_{v,w}\to\NCTP_{2v,2w}$ to which we 
refer as ``multiplication by 2''.

\section{Cyclic relative difference families} \label{sec:RDS}

Let us define the map, $\Phi_v$, from the set of binary sequences 
of length $v$ into the set of $v$-subsets of the finite cyclic  
group $\bZ_{2v}$ of integers modulo $2v$. 
If $a=(a_0,a_1,\ldots,a_{v-1})$ is a binary sequence then
\begin{equation} \label{Phi}
\Phi_v(a) = \{i:a_i=1\} \cup \{v+i:a_i=-1\}.                             
\end{equation}
Note that $\Phi_v$ is injective and that its image consists 
of all $v$-subsets $X\subset\bZ_{2v}$ such that
$i-j\ne v$ for all $i,j\in X$.

We also need the definition of relative difference families 
in $\bZ_{2v}$. They are relative to the subgroup $\{0,v\}$ 
of order 2.

\begin{definition} \label{Rel-dif-fam}
The subsets $X_1,X_2,\ldots,X_s$ of $\bZ_{2v}$ form a 
{\em relative difference family} if for each integer 
$m\in\bZ_{2v}\setminus\{0,v\}$ the set of triples 
$\{(i,j,k):\{i,j\}\subseteq X_k,~ i-j\equiv m \pmod{2v} \}$ 
has fixed cardinality $\lambda$, independent of $m$, and there 
is no such triple if $m=v$.
\end{definition}

Note that the parameter $\lambda$ is uniquely determined 
by the obvious equation
\begin{equation} \label{lambda}
\sum_{i=1}^s k_i(k_i-1) = 2\lambda(v-1),                           
\end{equation}
where $k_i=|X_i|$ is the cardinality of $X_i$.

Let us now define the equivalence of relative difference 
families consisting of two $v$-subsets $X,Y\subset\bZ_{2v}$. First we define five types of elementary 
transformations which preserve such families:\\

(i) replace $X$ or $Y$ with its image by the map 
$i\to v-1-i \pmod{2v}$;

(ii) replace $X$ or $Y$ with its image by the map 
$i\to i+1 \pmod{2v}$;

(iii) switch $X$ and $Y$;

(iv) for $k$ relatively prime to $2v$, replace $X$ and $Y$ with 
their images by the map $i\to ki \pmod{2v}$;

(v) replace $X$ and $Y$ with their images by the map which 
fixes the even integers and sends $i\to v+i \pmod{2v}$ 
if $i$ is odd.

\begin{definition} \label{equiv-fam}
Two relative difference families $(X,Y)$ and $(X',Y')$ 
on $\bZ_{2v}$ are {\em equivalent} to each other if one can be 
transformed to the other by a finite sequence of the above 
elementary transformations.
\end{definition}

Let $(a,b)$ be a pair of binary sequences of length $v$ and let 
$X=\Phi_v(a)$ and $Y=\Phi_v(b)$ be the corresponding $v$-subsets 
of $\bZ_{2v}$. We shall see below that $(a,b)$ is an NG-pair 
if and only if $(X,Y)$ is a relative difference family. 
Moreover, the mapping sending $(a,b)\to(\Phi_v(a),\Phi_v(b))$ 
preserves the equivalence classes. 
This follows from the fact that $\Phi_v$ commutes with the 
elementary operations (i-v) defined for NG-pairs in 
section \ref{sec:Golay}and defined above for relative difference families. For instance, if $a'$ is the binary sequence obtained 
from $a$ by applying the elementary transformation (i), then the 
set $\Phi_v(a')$ is obtained from $\Phi_v(a)$ by applying the 
elementary transformation (i) defined above.

As indicated above, the NG-pairs are closely related to relative difference families. The following two propositions make this 
more precise.

\begin{proposition} \label{RDF-seq}
Let  $a^{(1)},a^{(2)},\ldots,a^{(s)}$ be binary sequences of 
length $v$ and let $X_1,X_2,\ldots,X_s$ be the subsets 
of $\bZ_{2v}$ defined by $X_i=\Phi_v(a^{(i)})$. 
If $X_1,X_2,\ldots,X_s$ form a relative difference family in 
$\bZ_{2v}$, then the sequences $a^{(1)},a^{(2)},\ldots,a^{(s)}$ 
are N-complementary. 
\end{proposition}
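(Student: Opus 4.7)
The plan is to translate everything into the group ring $\bZ[\bZ_{2v}]\cong \bZ[z]/(z^{2v}-1)$ (with $z$ a generator of $\bZ_{2v}$) and then project onto the quotient $\bZ[z]/(z^v+1)$, in which $z^v\equiv -1$. The first step is to verify that, for any binary sequence $a$ of length $v$, the indicator $\sum_{m\in\Phi_v(a)} z^m \in \bZ[\bZ_{2v}]$ reduces modulo $z^v+1$ to the polynomial $a(z) = \sum_{i=0}^{v-1} a_i z^i$; this is immediate from the definition of $\Phi_v$, since the elements $v+i \in \Phi_v(a)$ (which correspond to the indices $i$ with $a_i=-1$) contribute $z^{v+i}\equiv -z^i$.

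Next I would record the relative difference family hypothesis as a group-ring identity. Writing $X_i$ also for the corresponding element $\sum_{m\in X_i} z^m$ and $X_i^{(-1)} = \sum_{m\in X_i} z^{-m}$, the coefficient of $z^m$ in $\sum_{i=1}^s X_i X_i^{(-1)}$ is, by Definition \ref{Rel-dif-fam}, equal to $sv$ if $m=0$, to $0$ if $m=v$, and to $\lambda$ otherwise. Therefore
\[
\sum_{i=1}^s X_i X_i^{(-1)} = (sv-\lambda)\cdot 1 - \lambda z^v + \lambda G,
\]
where $G = 1 + z + \cdots + z^{2v-1}$. Reducing modulo $z^v+1$ collapses the right-hand side to the scalar $sv$, since $G = (1+z^v)(1+z+\cdots+z^{v-1}) \equiv 0$ and $z^v \equiv -1$.

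The third ingredient is the polynomial identity
\[
a(z)\,a(z^{-1}) \equiv \sum_{k=0}^{v-1}\NAF_a(k)\, z^k \pmod{z^v+1},
\]
valid for any integer sequence $a$ of length $v$. I would verify this by expanding the left-hand side, using $z^{-k} \equiv -z^{v-k}$ for $0<k<v$, the identity $\AF_a(0)=v$ in the binary case, and the splitting (\ref{NAFa}). Combining this with the collapsed identity from the previous paragraph gives
\[
\sum_{i=1}^s \sum_{k=0}^{v-1} \NAF_{a^{(i)}}(k)\, z^k = sv \quad\text{in }\bZ[z]/(z^v+1),
\]
and comparing coefficients of $z^k$ for $0<k<v$ yields $\sum_i \NAF_{a^{(i)}}(k)=0$, which is precisely the N-complementarity.

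The only delicate point is the sign bookkeeping, both in identifying $\Phi_v$ with the polynomial reduction and in the formula relating $a(z)a(z^{-1})$ to $\NAF_a$: both hinge on the fact that passing from $\bZ[\bZ_{2v}]$ to $\bZ[z]/(z^v+1)$ introduces precisely the negaperiodic sign convention (rather than the periodic one, which would correspond to quotienting by $z^v-1$). Once these signs are correctly accounted for, the proof reduces to a short calculation in the quotient ring.
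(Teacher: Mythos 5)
Your proposal is correct and follows essentially the same route as the paper's proof: encode the relative difference family as an identity in the group ring $\bZ[z]/(z^{2v}-1)$, push it through the canonical homomorphism to $\bZ[z]/(z^v+1)$ (where the difference-family term collapses to the scalar $sv$), and identify the image of each $X_i$ with $a^{(i)}(z)$ so that $a^{(i)}(z)a^{(i)}(z^{-1})$ becomes the $\NAF$ generating polynomial. The sign bookkeeping you flag is handled correctly, so no gaps remain.
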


\begin{proof}
We identify the group ring of $\bZ_{2v}$ 
over the integers with the quotient ring $\bZ[x]/(x^{2v}-1)$ 
of the polynomial ring $\bZ[x]$. The cyclic group $\bZ_{2v}$ is identified with the multiplicative group $\langle x\rangle$ 
by the isomorphism sending $i\to x^i$. The inversion map 
on $\langle x\rangle$ extends to an involutory automorphism 
of $\bZ[x]/(x^{2v}-1)$ which we denote by ``$*$''. The 
subsets $X_i$ are now viewed as subsets of $\langle x\rangle$, 
and will be identified with the sum of their elements in 
$\bZ[x]/(x^{2v}-1)$.

Since the $X_i$ form a relative difference family, we have
\begin{equation} \label{gr-ring-id}
\sum_{i=1}^s X_i X_i^* = \sum_{i=1}^s k_i 
+ \lambda(1+x^v)(x+x^2+\cdots+x^{v-1}).
\end{equation}

The ring of integer negacyclic matrices of order $v$ is 
isomorphic to the quotient ring $\bZ[x]/(x^v+1)$. 
It also has an involutory automorphism ``$*$'' which sends 
$x$ to $x^{-1}$.
Let $f:\bZ[x]/(x^{2v}-1)\to\bZ[x]/(x^v+1)$ be the canonical homomorphism and note that $f(x^v)=-1$.
By applying $f$ to the identity (\ref{gr-ring-id}) we 
obtain that 
$$
\sum_{i=1}^s f(X_i) f(X_i)^* = \sum_{i=1}^s k_i.
$$
Note that $f(X_i)=\sum_{j=0}^{v-1} a^{(i)}_j x^j$ and
$$
f(X_i) f(X_i)^*=\sum_{i=0}^{v-1} \NAF_{a^{(i)}}(j)x^j.
$$
It follows that $\sum_{i=1}^s \NAF_{a^{(i)}}(j)=0$ for 
$j=1,2,\ldots,v-1$, i.e., the sequences 
$a^{(1)},a^{(2)},\ldots,a^{(s)}$ are N-complementary.
\end{proof}

The following partial converse holds.

\begin{proposition} \label{seq-RDF}
Let $a=(a_0,a_1,\ldots,a_{v-1})$ and $b=(b_0,b_1,\ldots,b_{v-1})$ 
be an NG-pair. Then the subsets $X=\Phi_v(a)$ and $Y=\Phi_v(b)$
form a relative difference family in $\bZ_{2v}$ with 
parameter $\lambda=v$.
\end{proposition}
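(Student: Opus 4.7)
The plan is to translate the relative difference family condition for $X = \Phi_v(a)$ and $Y = \Phi_v(b)$ into a single identity in the group ring $R := \bZ[x]/(x^{2v}-1)$ and then verify that identity by reducing modulo $x^v + 1$ and modulo $x^v - 1$ separately. Identifying $X$ and $Y$ with the sums of their characteristic monomials, the RDF condition with parameter $\lambda$ (together with the vanishing of the count at $m = v$) is equivalent to
$$ X X^* + Y Y^* = 2v + \lambda (1 + x^v)(x + x^2 + \cdots + x^{v-1}) \qquad \text{in } R, $$
and a coefficient count ($|X|^2 + |Y|^2 = 2v + 2\lambda(v-1)$, using $|X| = |Y| = v$) forces $\lambda = v$. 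So proving Proposition \ref{seq-RDF} is equivalent to establishing this identity.

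The key auxiliary observation is that the two projections $f \colon R \to \bZ[x]/(x^v+1)$ and $g \colon R \to \bZ[x]/(x^v-1)$ are jointly injective: if $P = \sum_{i=0}^{2v-1} c_i x^i \in R$ is killed by both, then $c_k - c_{k+v} = 0$ and $c_k + c_{k+v} = 0$ for each $0 \le k < v$, so $P = 0$. It therefore suffices to verify the desired identity after applying $f$ and after applying $g$.

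Modulo $x^v + 1$, the image of $X$ is the polynomial $a(x) = \sum a_i x^i$ (the shift by $v$ in the definition of $\Phi_v$ becomes multiplication by $-1$), and similarly $f(Y) = b(x)$. The N-complementarity of $(a, b)$, argued exactly as in the proof of Proposition \ref{RDF-seq}, yields $a(x) a(x)^* + b(x) b(x)^* = 2v$; this matches the image of the right hand side, since $1 + x^v \mapsto 0$. Modulo $x^v - 1$, the defining property of $\Phi_v$---that each coset $\{i, v+i\}$ of $\{0,v\} \subset \bZ_{2v}$ meets $X$ in exactly one element, and likewise for $Y$---gives $g(X) = g(Y) = T := 1 + x + \cdots + x^{v-1}$. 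Since $T T^* = T^2 = v T$ in $\bZ[x]/(x^v-1)$, the left hand side becomes $2vT$; and the right hand side becomes $2v + 2v(T - 1) = 2vT$ as well. The two checks combine, via joint injectivity, to give the identity in $R$ and hence the RDF with $\lambda = v$. There is no genuine obstacle; the only mildly delicate point is the joint injectivity of $f$ and $g$, and the ``no triple at $m = v$'' clause is automatic, since the $x^v$-coefficient of the right hand side is manifestly zero.
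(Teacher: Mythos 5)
Your proof is correct and follows essentially the same route as the paper: both work in $\bZ[x]/(x^{2v}-1)$ and exploit its splitting into $\bZ[x]/(x^v-1)\times\bZ[x]/(x^v+1)$, using N-complementarity to control the negacyclic factor and the structure of $\Phi_v$ (one element per coset of $\{0,v\}$) to control the cyclic factor. The only cosmetic difference is that the paper first deduces that $XX^*+YY^*-2v$ lies in $\ker f$ and then solves for the coefficients via $g$, whereas you state the target identity up front and verify it in both quotients using their joint injectivity; the mathematical content is identical.
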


\begin{proof}
We set $R=\bZ[x]/(x^{2v}-1)$, $R^+=\bZ[x]/(x^v-1)$ and 
$R^-=\bZ[x]/(x^v+1)$. Denote the canonical image of $x\in R$ 
in $R^+$ and $R^-$ by $y$ and $z$, respectively. In the proof 
of Proposition \ref{RDF-seq} we have defined the involution 
``$*$'' in $R$ and $R^+$. There is also one in $R^-$ which 
sends $z\to z^{-1}=-z^{v-1}$. These involutions commute with 
the canonical homomorphisms $f:R\to R^-$ and $g:R\to R^+$. 
Note that $R$ is isomorphic to the direct product 
$R^+\times R^-$.

Since $(a,b)$ is an NG-pair, the elements $p,q\in R^-$ 
defined by $p=\sum a_iz^i$ and $q=\sum b_i z^i$ satisfy 
$pp^*+qq^*=2v$. For convenience we identify $X$ with the 
sum of its elements in $R$, and similarly for $Y$. 
Then we have $f(X)=p$ and $f(Y)=q$. It follows that 
$f(XX^*+YY^*-2v)=0$. Thus $XX^*+YY^*-2v$ belongs to the 
kernel of $f$ and, by using the fact that 
$(x^v+1)x^v=x^v+1$ in $R$, we obtain an equality
\begin{equation} \label{X2+Y2}
XX^*+YY^* = 2v + (x^v+1)(c_0+c_1x+\cdots+c_{v-1}x^{v-1}),
\end{equation}
where the $c_i$ are some integers. Since $X=\Phi_v(a)$ 
and $y^v=1$, we have $g(X)=1+y+\cdots+y^{v-1}$. 
Similarly, $g(Y)=g(X)$. Note that $g(X)^*=g(X)$ and 
$g(X)^2=vg(X)$. Hence, by applying $g$ to the equality 
(\ref{X2+Y2}), we obtain that
$$
2v(1+y+\cdots+y^{v-1})=2v+2(c_0+c_1y+\cdots+c_{v-1}y^{v-1}).
$$
We deduce that $c_0=0$ and $c_i=v$ for $i\ne0$. The 
equality (\ref{X2+Y2}) now gives
$$
XX^*+YY^* = 2v + v(x^v+1)(x+x^2+\cdots+x^{v-1}).
$$
Hence $X$ and $Y$ indeed form a relative difference family in 
$\bZ_{2v}$ with the parameter $\lambda=v$.
\end{proof}

It was shown in \cite[Conjecture 1]{ACP:2001} that the Ito's 
conjecture is equivalent to the assertion that for each $t\ge1$ 
there exists a relative difference family $X_1,X_2$ in the cyclic group $\bZ_{4t}$ with $|X_1|=|X_2|=2t$ and $\lambda=2t$. 
By Propositions \ref{RDF-seq} and \ref{seq-RDF} this is in turn 
equivalent to Conjecture \ref{hip-3}.

\section{Paley C-matrices} \label{sec:Paley-C}

A {\em conference matrix} (or {\em C-matrix}) of order $v$ is a 
matrix $C$ of order $v$ whose diagonal entries are 0, the other entries are $\pm1$, and such that $CC^T=(v-1)I$, where $I$ is the identity matrix. There are two well-known necessary conditions for the existence of such matrices. First, $v$ must be even. (We exclude hereafter the trivial case $v=1$.) Second, if 
$v \equiv 2 \pmod{4}$ then $v-1$ must be the sum of two squares. For the existence of negacyclic C-matrices of order 
$v \equiv 4 \pmod{8}$ there is another necessary condition 
\cite{DGS}, namely that 
$v-1 = a^2+2b^2$ for some integers $a$ and $b$.

Two C-matrices are said 
to be {\em equivalent} if they have the same order and one can be obtained from the other by applying a finite sequence of 
the following elementary transformations:
multiplication of a row or a column by $-1$, and interchanging 
simultaneously two rows and the corresponding two columns.

If $v=1+q$ where $q$ is a power of a prime, then Paley 
\cite{Pa} has constructed conference matrices of order $v$. His construction employs essentially the theory of finite fields. 
Let us recall a general definition as given in \cite{DGS}. 
Denote by $V$ a two-dimensional vector space over the Galois 
field $\GF(q)$. Choose any set $X$ of $1+q$ pairwise linearly 
independent vectors of $V$. Denote by $\chi$ the quadratic character of $\GF(q)$. In particular, $\chi(0)=0$. 
(If $q$ is a prime, then $\chi$ is the classical Legendre symbol.) Then the matrix 

\begin{equation} \label{matrica-C_X}
C_X=[\chi(\det(\xi,\eta))], \quad \xi,\eta\in X,
\end{equation}
associated with $X$, is a C-matrix of order $1+q$. 
If $q\equiv 1 \pmod{4}$ then $\chi(-1)=1$ while when 
$q\equiv 3 \pmod{4}$ we have $\chi(-1)=-1$. 
Hence, $C_X$ is symmetric in the former case and 
skew-symmetric in the latter case. 
We refer to $C_X$ as the {\em Paley (conference) matrix}.
It is known that all Paley conference matrices of the same 
order are equivalent to each other \cite{GS:1967}.

In contrast to Conjecture \ref{neg-conj}, there exist an infinite series of negacyclic C-matrices. Indeed, it is shown in 
\cite[Corollary 7.2]{DGS} that each Paley C-matrix is equivalent to a negacyclic C-matrix.

Consequently, the following facts hold.

\begin{proposition} \label{Paley-Nega}
Let $q$ be an odd prime power. Then there exist 

(i)   a negacyclic conference matrix $C$ of order $1+q$; 

(ii)  a 2N-type Hadamard matrix $H$ of order $2(1+q)$;

(iii) an NG-pair of length $1+q$.
\end{proposition}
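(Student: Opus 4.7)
The plan is to obtain part (i) for free from the DGS result cited just above the proposition: every Paley C-matrix is equivalent to a negacyclic one, and a Paley C-matrix of order $1+q$ exists for every odd prime power $q$. Given (i), parts (ii) and (iii) are equivalent by Proposition \ref{NG:HM}, so it will suffice to construct an NG-pair of length $v := 1+q$ starting from the negacyclic C-matrix $C$ produced in (i).

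For the construction, I would set $A = I + C$ and $B = I - C$. Both matrices are negacyclic, because $I$ and $C$ are polynomials in $N$; and because the diagonal of $C$ vanishes while the off-diagonal entries lie in $\{\pm 1\}$, both $A$ and $B$ are binary matrices with diagonal entries equal to $+1$. Let $a$ and $b$ denote their first rows. The main computation is then a single identity: using $CC^T = (v-1)I$ one gets
$$
AA^T + BB^T \;=\; (I+C)(I+C^T) + (I-C)(I-C^T) \;=\; 2I + 2\,CC^T \;=\; 2v\,I.
$$
Crucially, this holds uniformly in the two cases $q \equiv 1 \pmod{4}$ and $q \equiv 3 \pmod{4}$, so there is no need to split into the symmetric and the skew-symmetric Paley cases. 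Combined with formula (\ref{jed-AAtr}) and the linear independence of $I, N, \ldots, N^{v-1}$, the displayed identity forces $\NAF_a(k)+\NAF_b(k)=0$ for $1\le k\le v-1$, so $(a,b)$ is N-complementary and hence an NG-pair of length $1+q$. This proves (iii), and then (ii) follows by feeding $(a,b)$ into Proposition \ref{NG:HM}.

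There is essentially no serious obstacle: the only imported ingredient is the DGS negacyclization of Paley C-matrices, which the authors have already invoked. The small piece of care worth highlighting is packaging the construction as $A = I+C$, $B = I-C$ so that a single calculation covers both residue classes of $q \pmod 4$; once this is observed, everything reduces to (\ref{jed-AAtr}) and the already-proved bijection of Proposition \ref{NG:HM}.
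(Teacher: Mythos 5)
Your proposal is correct and follows essentially the same route as the paper: the paper also takes the negacyclic Paley C-matrix from \cite[Corollary 7.2]{DGS}, sets $A=C+I$, $B=C-I$ (your $B=I-C$ is just the negation, which has the same $\NAF$), and deduces (iii) from (ii) via Proposition \ref{NG:HM}. The only difference is that you spell out the verification $AA^T+BB^T=2vI$ that the paper leaves implicit.
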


\begin{proof} In (ii) we can take $H$ to be the matrix 
(\ref{mat-H}) with $A=C+I$ and $B=C-I$. By Proposition 
\ref{NG:HM}, (iii) is equivalent to (ii).
Explicitly, if $(0,c_1,c_2,\ldots,c_q)$ is the 
first row of $C$, then the sequences 
$(1,c_1,c_2,\ldots,c_q)$ and $(-1,c_1,c_2,\ldots,c_q)$ 
form an NG-pair of length $1+q$.
\end{proof}

In Appendix A we list the first rows of the negacyclic Paley 
C-matrices of order $v=1+q\le 128$.

Let $C$ be a negacyclic conference matrix of order $v$ with 
first row $(0,c_1,c_2,\ldots,c_{v-1})$. By a theorem of 
Belevitch (see \cite[Theorem 4.1]{DGS} we have

\begin{equation} \label{jed-Bel}
c_{v/2+j}=(-1)^j c_{v/2-j}, \quad j=1,2,\ldots,v/2-1.
\end{equation}

One may try to find a counter-example to Conjecture 
\ref{neg-conj} as follows. 
Let $q \equiv 3 \pmod{4}$ be a prime power. 
There exists a negacyclic Paley C-matrix $C$ of order $1+q$. 
However, the equations (\ref{jed-Bel}) imply that $C$ is not 
skew-symmetric. Hence $C+I$ is not a Hadamard matrix. 
On the other hand, we know that $C$ is equivalent to a 
skew-symmetric conference matrix $C'$, and so $C'+I$ is a Hadamard matrix. However, $C'+I$ is not negacyclic. It appears 
that $C$ cannot be used to give a negacyclic Hadamard matrix
of order $1+q$.

The two cases $q\equiv 1 \pmod{4}$ and $q\equiv 3 \pmod{4}$ 
in Proposition \ref{Paley-Nega} should be considered separately. 
Indeed, we shall show in section \ref{sec:Paley-2} that in the 
latter case the assertion (iii) of that proposition can be made 
stronger, namely we can replace $1+q$ by $(1+q)/2$.

\section{The first Paley series} \label{sec:Paley-1}

We say that any NG-pair $(a,b)$ of length $v=1+q$ resulting from Proposition \ref{Paley-Nega}, with $q\equiv 1 \pmod{4}$, 
belongs to the {\em first Paley series}. From the proof of that proposition, we recall that $a$ and $b$ are the same sequence except that $b_0=-a_0$.

In this section we assume that $q$ is a prime power and 
that $q\equiv 1 \pmod{4}$. We recall Theorem 7.3 of \cite{DGS}. 

It is easy to verify that if $A$ is a negacyclic matrix of odd order $t$ and $Z$ the diagonal matrix of order $t$ with the diagonal elements $1,-1,1,-1,...$, then the matrix $ZAZ$ is cyclic (and the converse holds). 

\begin{proposition} \label{stav-2C}
Any Paley conference matrix of order $v=1+q \equiv 2 \pmod{4}$, $q$ a prime power, is equivalent to a conference matrix of 
2C-type with symmetric circulant blocks.
\end{proposition}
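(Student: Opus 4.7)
The plan is to combine the reduction of a Paley C-matrix to a 2N-type matrix with \emph{symmetric} negacyclic blocks (essentially the content of Theorem 7.3 of \cite{DGS}) with the $ZAZ$-trick recalled just before the proposition, which converts a negacyclic matrix of odd order into a cyclic one and, because $Z=Z^{T}$ and $Z^{2}=I$, preserves symmetry.

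First, by \cite[Corollary 7.2]{DGS} the Paley C-matrix $C$ of order $v=1+q$ is equivalent to a negacyclic C-matrix whose first row $(c_0,\ldots,c_{v-1})$ satisfies the Belevitch relation (\ref{jed-Bel}). Since $m:=v/2$ is odd, the group $\bZ_{2m}$ has a unique subgroup of order two; I would exploit this by choosing a permutation $P$ of $\{0,1,\ldots,v-1\}$ that separates indices according to their class modulo this subgroup, in such a way that
$$PCP^{T}=\left[\begin{array}{cc} A_0 & B_0 \\ -B_0^{T} & A_0^{T}\end{array}\right],$$
with $A_0,B_0$ negacyclic of order $m$. The Belevitch relation is designed precisely so that, with the correct choice of $P$, both blocks $A_0$ and $B_0$ come out symmetric; this encapsulation of (\ref{jed-Bel}) is essentially the statement of Theorem 7.3 of \cite{DGS}. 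Setting $D=\diag(Z,Z)$ with $Z=\diag(1,-1,1,\ldots,1)$ of order $m$, the matrix
$$D(PCP^{T})D=\left[\begin{array}{cc} ZA_0Z & ZB_0Z \\ -(ZB_0Z)^{T} & (ZA_0Z)^{T}\end{array}\right]$$
has, by the preliminary observation and since conjugation by $Z$ preserves symmetry, symmetric circulant blocks, and it is equivalent to $C$ as required.

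The delicate step is producing the permutation $P$ that simultaneously renders both $A_0$ and $B_0$ symmetric. A naive even/odd split yields $A_0$ symmetric but $B_0$ only negacyclic (not symmetric in general), while reversing one of the two halves of the index set gives a symmetric Hankel $B_0$ but destroys the negacyclic structure. The correct choice must realize both symmetry constraints at once, which is exactly the point at which the full force of the Belevitch relation, in the form quoted as Theorem 7.3 of \cite{DGS}, is required; the remaining computations are routine.
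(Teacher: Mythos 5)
Your overall strategy (pass to the negacyclic representative via \cite[Corollary 7.2]{DGS}, split into two blocks of odd order $v/2$, then conjugate by $Z$ to turn negacyclic blocks into circulants while preserving symmetry) matches the paper's. But the proposal has a genuine gap at exactly the point you yourself flag as ``delicate'': you never produce the construction that makes the second block symmetric, and the authority you appeal to for that step --- Theorem 7.3 of \cite{DGS} --- \emph{is} the proposition being proved (the paper introduces it with ``We recall Theorem 7.3 of \cite{DGS}'' and then offers an independent, constructive proof). Invoking that theorem to justify the key step makes the argument circular, and the closing claim that ``the remaining computations are routine'' is not warranted: as you correctly observe, the naive even/odd split leaves the second block non-symmetric, and no single relabelling of the indices repairs this while keeping both blocks circulant.

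The paper's proof fills this gap with a concrete extra step that your proposal is missing. After splitting the first row $c=(0,c_1,c_2,\ldots,c_{v-1})$ into $a=(0,c_2,c_4,\ldots,c_{v-2})$ and $b=(c_1,c_3,\ldots,c_{v-1})$ (and checking $\NAF_c(2k)=\NAF_a(k)+\NAF_b(k)$, so that the negacyclic blocks $A,B$ plugged into the array (\ref{mat-H}) give a 2N-type C-matrix), conjugation by $Z$ together with the Belevitch relation (\ref{jed-Bel}) yields that $ZAZ$ is a symmetric circulant but only that the \emph{first row} of $ZBZ$ is palindromic --- which for a circulant is strictly weaker than symmetry of the matrix. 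The missing ingredient is an additional cyclic shift: replacing $ZBZ$ by $ZBZP^{m}$ with $m=(q-1)/4$ recentres the palindrome and produces a genuinely symmetric circulant, and only then does plugging $ZAZ$ and $ZBZP^{m}$ into (\ref{mat-H}) give the desired 2C-type conference matrix with symmetric blocks. Without this shift (or an equivalent device) your second block is not symmetric and the proof does not close.
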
 

Let us give an independent and constructive proof of  Proposition \ref{stav-2C} in the case of negacyclic conference matrices.

\begin{proof}
Let $C$ be a negacyclic conference matrix of order 
$v\equiv 2\pmod{4}$. We shall transform it into the 2N-form, 
and also into the 2C-form with symmetric blocks.

First, we split the first row $c=(0,c_1,c_2,\ldots,c_{v-1})$ of $C$ into two pieces $a=(0,c_2,c_4,\ldots,c_{v-2})$ and 
$b=(c_1,c_3,\ldots,c_{v-1})$. 
One can easily verify that for each integer $k$ we have 
$\NAF_c(2k)=\NAF_a(k)+\NAF_b(k)$. 
It follows that $a$ and $b$ are N-complementary sequences.
Let $A$ and $B$ be the negacyclic matrices with first row 
$a$ and $b$, respectively. 
By plugging the blocks $A$ and $B$ into the array (\ref{mat-H}), we obtain a C-matrix of 2N-type.

Second, we replace $A$ and $B$ with the circulants 
$ZAZ$ and $ZBZ$. The equations (\ref{jed-Bel}) imply that the block $ZAZ$ is symmetric and the first row of $ZBZ$ is symmetric.

Third, we replace the block $ZBZ$ with $ZBZP^m$ where 
$m=(q-1)/4$. Note that $ZBZP^m$ is a symmetric circulant.  There is no need to change the block $ZAZ$. 
By plugging the blocks $ZAZ$ and $ZBZP^m$ into the array 
(\ref{mat-H}), we obtain a C-matrix of 2C-type with symmetric 
blocks.
\end{proof}

Let us give an example. For $q=13$ we have 
$v=14$ and $m=3$. From the table in Appendix A, the first row 
of $C$ is $c=(0,+,+,+,+,+,-,-,+,+,-,+,-,+)$.  
Thus, $a=(0,+,+,-,+,-,-)$ and $b=(+,+,+,-,+,+,+)$. 
The first rows of $ZAZ$ and $ZBZ$ are 
$a'=(0,-,+,+,+,+,-)$ and $b'=(+,-,+,+,+,-,+)$. 
Finally, the first row of the circulant $ZBZP^m$ is 
$b''=(+,+,-,+,+,-,+)$. Thus, the block $ZBZP^m$ is also 
symmetric. By plugging the symmetric circulants $A$ and $B$ 
with first rows $a'$ and $b''$ into the array (\ref{mat-H}), 
we obtain the desired C-matrix of 2C-type.

In Appendix B, for negacyclic Paley C-matrices listed 
in Appendix A and of order $v\equiv 2\pmod{4}$, we list the 
first rows of the symmetric circulant blocks computed by 
the above procedure.

\section{The second Paley series} \label{sec:Paley-2}

In this section we denote by $C$ a negacyclic C-matrix 
of order $n\equiv 0 \pmod{4}$. For convenience we set 
$v=n/2$. We give a very simple construction for NG-pairs of length $v$. In particular we can take $n=1+q$ where 
$q\equiv 3 \pmod{4}$ is a prime power. 
Indeed, as mentioned earlier, we know that any Paley C-matrix 
of order $1+q$ is equivalent to a negacyclic C-matrix. 
We point out that we do not have any other examples of matrices $C$. 

\begin{proposition} \label{prop:Paley-2}
Let $C$ be a negacyclic C-matrix of order $n\equiv 0 \pmod{4}$. 
If $c=(0,c_1,c_2,\ldots,c_{n-1})$ is the first row of $C$, then 
the sequences $a=(1,c_2,c_4,\ldots,c_{n-2})$ and 
$b=(c_1,c_3,\ldots,c_{n-1})$ form an NG-pair of length $v=n/2$. 
\end{proposition}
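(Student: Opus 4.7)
The plan is to transfer the problem into the negacyclic ring $R_n = \bZ[z]/(z^n + 1)$ and decouple the even- and odd-indexed parts of $c$. Write $c(z) = \sum_{i=0}^{n-1} c_i z^i$ and split it as $c(z) = a'(z^2) + z b(z^2)$, where $a'(w) = c_0 + c_2 w + \cdots + c_{n-2} w^{v-1}$ and $b(w) = c_1 + c_3 w + \cdots + c_{n-1} w^{v-1}$. Note that $a'_0 = c_0 = 0$, while the sequence in the statement is $a = a' + e_0$, so $a(w) = 1 + a'(w)$. Setting $w = z^2$ realizes $R_v = \bZ[w]/(w^v + 1)$ as the fixed subring of the involution $z \to -z$ on $R_n$.

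First I would translate the conference condition. By formula (\ref{jed-AAtr}), $CC^T = (n-1)I$ gives $c(z) c(z^{-1}) \equiv n - 1 \pmod{z^n+1}$. Expanding the product using the even/odd decomposition produces an even-in-$z$ piece $a'(w)a'(w^{-1}) + b(w)b(w^{-1})$ and an odd-in-$z$ piece $z a'(w^{-1}) b(w) + z^{-1} a'(w) b(w^{-1})$. Only the former contributes to the constant $n-1$, so
\begin{equation*}
a'(w) a'(w^{-1}) + b(w) b(w^{-1}) \equiv n-1 \pmod{w^v + 1}.
\end{equation*}

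Next I would substitute $a(w) = 1 + a'(w)$ and expand. Since $a'_0 = 0$, the cross terms $a'(w)$ and $a'(w^{-1})$ both have zero constant term, giving
\begin{equation*}
a(w) a(w^{-1}) + b(w) b(w^{-1}) \equiv n + a'(w) + a'(w^{-1}) \pmod{w^v + 1}.
\end{equation*}
By (\ref{jed-AAtr}) applied to the negacyclic matrices with first rows $a$ and $b$, the NG-pair property $(a,b)\in\NGP_v$ (equivalently, $\NAF_a(k) + \NAF_b(k) = 0$ for $1 \le k \le v-1$) reduces to showing $a'(w) + a'(w^{-1}) \equiv 0 \pmod{w^v+1}$. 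Using $w^{-i} \equiv -w^{v-i} \pmod{w^v+1}$ for $0 < i < v$, this becomes the pointwise identity $c_{2k} = c_{n-2k}$ for $1 \le k \le v-1$.

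The final step is Belevitch's theorem (\ref{jed-Bel}) applied to $C$ of order $n$: $c_{n/2 + j} = (-1)^j c_{n/2 - j}$ for $1 \le j \le n/2 - 1$. Setting $j = n/2 - 2k$ gives exactly the identity required, provided $j$ is even. This is where the hypothesis $n \equiv 0 \pmod 4$ is essential: it forces $n/2$ to be even, so $j = n/2 - 2k$ is even and $(-1)^j = 1$. The only nontrivial bookkeeping is the clean separation of even- and odd-parity terms in $c(z) c(z^{-1})$; the role of the divisibility hypothesis, which enters exactly once at the final Belevitch invocation, is the single conceptual point of the argument.
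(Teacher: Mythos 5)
Your proof is correct and follows essentially the same route as the paper's: the even-part extraction of $c(z)c(z^{-1})$ is exactly the paper's identity $\NAF_{a'}(k)+\NAF_b(k)=\NAF_c(2k)$, and your reduction to $a'(w)+a'(w^{-1})\equiv 0$, i.e.\ $c_{2k}=c_{n-2k}$, is the paper's step showing that replacing the leading $0$ by $1$ leaves the $\NAF$ unchanged, settled in both cases by Belevitch's relation with $n/2$ even. The only difference is cosmetic --- you phrase the computation in the ring $\bZ[z]/(z^n+1)$ rather than directly with autocorrelation sums.
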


\begin{proof}
For convenience, we set $a'=(0,c_2,c_4,\ldots,c_{n-2})$. Then $\NAF_{a'}(k)+\NAF_b(k)=\NAF_c(2k)$ for 
$k=1,2,\ldots,v-1$. Since $C$ is a conference matrix, it 
follows from (\ref{jed-AAtr}) that $\NAF_c(k)=0$ for 
$k=1,2,\ldots,n-1$. Hence, $(a',b)$ is an N-complementary pair. However, this is not an NG-pair because the first term of $a'$ 
is 0. 

Let us write $a''=(x,a_1,a_2,\ldots,a_{v-1})$ 
with $a_i=c_{2i}$ for $i=1,2,\ldots,v-1$ and $x$ an 
integer variable. 
We claim that $\NAF_{a''}(k)=\NAF_{a'}(k)$ for $0<k<v$.
Indeed, we have 
$\NAF_{a''}(k)=\AF_{a''}(k)-\AF_{a''}(v-k)=
\NAF_{a'}(k)+x(a_k-a_{v-k})$. By Belevitch's theorem, we
have $a_k=a_{v-k}$ for $0<k<v$ and so $\NAF_{a'}(k)=\NAF_a(k)$. 
Thus our claim is proved.

If we now set $x=1$ then $a''=a$ and we conclude that
$\NAF_a(k)=\NAF_{a'}(k)$ for $0<k<v$. 
Consequently, $(a,b)$ is an NG-pair.
\end{proof}

We say that the NG-pairs constructed in this proposition 
belong to the {\em second Paley series}. 
We say that an NG-pair is a {\em Paley NG-pair} if it belongs 
to the first or the second Paley series.

In Appendix C we list the NG-pairs in the second Paley series 
obtained from the negacyclic C-matrices listed in Appendix A 
with $q\equiv 3 \pmod{4}$.

Out of the 63 odd positive integers $t\le125$, there are exactly 18 for which there is no Paley NG-pair of length $v=2t$.
Let us list these integers:

\begin{equation} \label{losi-2}
23,29,39,43,47,59,65,67,73,81,89,93,101,103,107,109,113,119.
\end{equation}

\section{Ito series} \label{sec:Ito}

There is another series, due to Ito \cite{Ito-III}, of NG-pairs 
of length $(1+q)/2$ when $q\equiv 3 \pmod{4}$ is a prime power. 
However, we will show below that the NG-pairs in this series  
belong to the second Paley series.

For convenience we set  $t=(1+q)/4=v/2$ and let $p$ be the prime 
such that $q=p^n$. The Ito series is
derived from the relative difference sets constructed by 
Ito \cite{Ito-III}. These relative difference sets $R$ have 
parameters $(4t,2,4t,2t)$ and lie in the dicyclic group
\begin{equation} \label{Dic}
\Dic_{8t}=\langle a^{4t}=1,~ b^2=a^{2t},~ bab^{-1}=a^{-1} \rangle
\end{equation}
of order $8t$. The forbidden subgroup is $\langle b^2 \rangle$. 

For convenience we identify a subset $X\subseteq\Dic_{8t}$ with 
the sum of its elements in the group-ring (over $\bZ$) of 
$\Dic_{8t}$. Then we can write $R=R_1+R_2 b$ with $R_1,R_2\subseteq\langle a\rangle$.
The sets $R_1$ and $R_2$ form a relative difference family 
in the cyclic group $\langle a\rangle$ (with the same forbidden 
subgroup). Let us identify $\langle a\rangle$ with $\bZ_{4t}$ 
by the isomorphism sending $a\to1$. It is obvious that $R_1$ and 
$R_2$ are $2t$-subsets of $\bZ_{4t}$. By Proposition 
\ref{RDF-seq}, the binary sequences $X_1=\Phi_v^{-1}(R_1)$ and 
$X_2=\Phi_v^{-1}(R_2)$ form an NG-pair.

We shall now describe a procedure which takes as input the 
integer $t$ and a primitive polynomial $f$ of degree $2n$ over 
the prime field $\GF(p)=\bZ_p$, and gives as output the 
NG-pair arising from the Ito's difference set $R$ in 
$\Dic_{8t}$. This procedure is based on the simplification 
of Ito's construction due to B. Schmidt 
\cite[Theorem 3.3]{Schmidt:1999}.

We construct the Galois field $\GF(q^2)$ by adjoining a root 
$x$ of $f$ to $\bZ_p$. As $q^2-1=((q-1)/2)\cdot(2(q+1))$ and 
$(q-1)/2=2t-1$ and $2(q+1)=8t$ are relatively prime, the 
multiplicative group $\GF(q^2)^*$ is a direct 
product of the subgroups $U$ of order $(q-1)/2$ and $W$ 
of order $2(q+1)$. Note that $U$ is the subgroup of squares 
in $\GF(q)^*$. (Thus we have $Q=U$ for the set $Q$ defined in 
the proof of \cite[Theorem 3.3]{Schmidt:1999}.)

As $f$ is primitive, $x$ generates 
$\GF(q^2)^*$ and the elements $u=x^{8t}$ and $w=x^{2t-1}$ 
generate $U$ and $W$, respectively. Since $x^{(q^2-1)/2}=-1$, 
the element $\alpha=x^{2t}$ satisfies the equation 
$\alpha+\alpha^q=0$, i.e., $\tr(\alpha)=0$ where 
$\tr:\GF(q^2)\to\GF(q)$ is the (relative) trace map. 
We set $v=2t$ and define two 
binary sequences $a=(a_0,a_1,\ldots,a_{v-1})$ and 
$b=(b_0,b_1,\ldots,b_{v-1})$ of length $v$. 
We declare that $a_i=1$ if and only if 
$\tr(\alpha w^{2i})\in U$, and declare that $b_i=1$ if and 
only if $\tr(\alpha w^{2i+1})\in U$.
Then $(a,b)\in\NGP_v$. Note that $a_0=-1$.

We say that the NG-pairs obtained by this procedure belong to  
the {\em Ito series}. They exist for lengths $v=2t$ where 
$q=4t-1$ is a prime power. 

For a sequence $a=(a_0,a_1,\ldots,a_{v-1})$ we say that it is 
{\em quasi-symmetric} if $a_i=a_{v-i}$ for $i=1,2,\ldots,v-1$. 
Note that the negacyclic matrix with first row $a$ is 
skew-symmetric if and only if $a$ is quasi-symmetric and 
$a_0=0$.

The Ito NG-pairs $(a,b)$ have some additional symmetries. Namely, 
$a$ is quasi-symmetric and $b$ is skew-symmetric. 
Both assertions follow from the fact that
$$
\tr(\alpha w^{8t-i})=\tr(\alpha w^{-i})=
\alpha(w^{-i}-w^{-iq})=\alpha w^{-i(q+1)}(w^{iq}-w^i)
=(-1)^i \tr(\alpha w^i).
$$
These symmetry properties were observed by Ito 
\cite[Proposition 6]{Ito-III}, 
as well as the fact that the 2N-type Hadamard matrix constructed 
from the NG-pair $(-a,b)$ is skew-Hadamard. 
(Since the diagonal entries of a skew-Hadamard matrix have to 
be equal to $+1$, we replaced $a$ with $-a$.) 

It follows from these symmetry properties that the negacyclic 
matrix with first row
$$
(0,b_0,a_1,b_1,\ldots,a_{v-1},b_{v-1})
$$
is a conference matrix. This shows that the 
NG-pair $(-a,b)$ belongs to the second Paley series.

In Appendix D we list the NG-pairs of length $v=(1+q)/2\le 154$ in the Ito series, with $q\equiv 3 \pmod{4}$ a prime power.
We have verified directly that each NG-pair listed in 
Appendix C is equivalent to the corresponding NG-pair (the 
one having the same length, $v$) in the list of Appendix D. 

There exist prime powers $q>1$ such that $q\equiv 1 \pmod{4}$ 
and $1+2q$ is also a prime power. For instance, 
$q=5,9,13,29,41$. For such $q$ there exist NG-pairs $(a,b)$ 
and $(c,d)$ of length $1+q$ which belong to the first and
the second Paley series, respectively. 
Then the following question arises: can $(a,b)$ and $(c,d)$ 
be equivalent? (We believe that the answer is negative.)

\section{Quasi-Williamson matrices} 
\label{sec:Williamson}

We say that four binary matrices $A,B,C,D$ of order $t$ are
{\em quasi-Williamson matrices} if they are circulants and 
satisfy the equations
\begin{eqnarray}
& AA^T+BB^T+CC^T+DD^T=4tI, \label{zbir-kvadrata} \\
& AB^T+CD^T=BA^T+DC^T. \label{amicable}
\end{eqnarray} 
This is the cyclic case of a more general definition given in \cite{Schmidt:1999}. In order to avoid a possible confusion, 
we have introduced a different name for this type of matrices.
Note that the above two equations amount to saying that 
the matrix
\begin{equation} \label{WillMat}
\left[ \begin{array}{cccc}
A & B & C & D \\
-B & A & -D & C \\
-C^T & D^T & A^T & -B^T \\
-D^T & -C^T & B^T & A^T \\
\end{array} \right]
\end{equation}
is a Hadamard matrix.

The {\em Williamson matrices} are the special case of 
quasi-Williamson matrices where we require all four blocks 
$A,B,C,D$ to be symmetric, in which case 
the condition (\ref{amicable}) is automatically satisfied. 
Let us mention the following two infinite series of Williamson matrices of order $t$. The first, due to Turyn, exists in 
orders $t=(1+q)/2$, where $q\equiv1 \pmod{4}$ is a prime 
power. Given a conference matrix of 2C-type, see Proposition 
\ref{stav-2C}, with symmetric circulant blocks, say $A$ and $B$, then the matrices $A+I,A-I,B,B$ are four Williamson matrices (this is the Turyn series). 
The second, due to Whiteman, exists in orders 
$t=p(1+p)/2$, where $p \equiv1 \pmod{4}$ is a prime.

In the rest of this section we assume that $t$ is odd. 
Then quasi-Williamson matrices of order $t$ are equivalent 
to relative difference sets in $\Dic_{8t}$ \cite{Schmidt:1999}. 

Let $a,b,c,d$ be the first rows of quasi-Williamson matrices 
of order $t$. We set $z=1$ if $t\equiv1 \pmod{4}$ and $z=-1$ otherwise. We shall describe a procedure which takes as input the
quadruple $a,b,c,d$ and gives as output an NG-pair of 
length $v=2t$. It is based on the proof of 
\cite[Theorem 2.1]{Schmidt:1999}. The subgroup $G\times\langle x\rangle$ of the group $G\times Q_8$, in the mentioned proof, is 
cyclic and is identified with $\bZ_{4t}$.

By using the rows $a$ and $b$, we construct a binary 
sequence $p$ of length $v$ as follows.
Say, $a=(a_0,a_1,\ldots,a_{t-1})$. 
We define two subsets $a',a''$ of $\bZ_t$ by
$a'=\{i:a_i=1\}$ and $a''=\{i:a_i=-1\}$. We define 
similarly the subsets $b',b''\subseteq\bZ_t$. 

For $i=0,1,2,3$ we define the map 
$\Psi_i:\bZ_t\to\bZ_{4t}$ by the formula
\begin{equation} \label{Psi}
\Psi_i(j)=j+t(z(i-j) \pmod{4}).
\end{equation}
It is easy to verify that the set 
$$X=\Psi_0(a')\cup\Psi_1(b')\cup\Psi_2(a'')\cup\Psi_3(b'')$$ 
lies in the image of the map $\Phi_v$ (see (\ref{Phi})).
Finally, we set $p=\Phi_v^{-1}(X)$, which is a binary 
sequence of length $v$.

Similarly, from $c$ and $d$ we construct first a $v$-subset
$Y\subseteq\bZ_{4t}$ and then the binary 
sequence $q=\Phi_v^{-1}(Y)$ of length $v$.
Then $(p,q)\in\NGP_v$.

We remark that the $v$-subsets $X$ and $Y$ form a relative 
difference family in $\bZ_{2v}$ with parameter $\lambda=v$ 
and the forbidden subgroup $\{0,v\}$.

The converse is also true: given an NG-pair $(a,b)$ of length 
$2t$ we can construct quasi-Williamson matrices $A,B,C,D$ of 
order $t$. As an example, we used the NG-pair of length 
$v=70$ given in Appendix D to compute four 
quasi-Williamson matrices $A,B,C,D$ of order 35. The first rows of these matrices (after some cyclic shifts) are:
$$
\begin{array}{lcl}
a&=& [+,-,+,+,-,+,-,+,+,+,+,+,-,+,+,+,-,-,+,+,-,-,-,+,-,-,-,\\&&
-,-,+,-,+,-,-,+],\\
b&=& [+,+,+,+,+,-,-,-,+,-,+,+,-,+,+,+,-,+,+,-,+,+,+,-,+,+,-,\\&&
+,-,-,-,+,+,+,+],\\
c&=& [-,+,-,+,+,-,+,-,+,+,+,-,-,-,-,+,+,+,+,+,+,-,-,+,+,-,+,\\&&
-,+,-,-,-,+,+,-],\\
d&=& [-,+,+,-,-,-,+,-,+,-,+,+,-,-,+,+,+,+,+,+,-,-,-,-,+,+,+,\\&&
-,+,-,+,+,-,+,-],
\end{array}
$$
respectively. The blocks $A,B,C,D$ satisfy the equations 
(\ref{zbir-kvadrata}) and (\ref{amicable}), and when plugged 
into the array (\ref{WillMat}) we do get a Hadamard matrix. 
Moreover, $A$ is of skew-type, while $B$ is symmetric,  
and $d$ is the reverse of $c$. 
Note also that $AB^T-BA^T=(A-A^T)B\ne0$, and so $A,B,C,D$ are not 
matrices of Williamson type according to 
\cite[Definition 3.3]{SebYam:1992}.

It is known that Williamson matrices of odd order $t$ exist 
for $t=23,29,39,43$, see e.g. \cite{HKT}. After removing these 
integers, the list (\ref{losi-2}) reduces to
\begin{equation} \label{losi-3}
47,59,65,67,73,81,89,93,101,103,107,109,113,119.
\end{equation}

Let us single out the smallest case.

{\bf Open Problem~} Do quasi-Williamson matrices of order 47 exist? Equivalently, do NG-pairs of length 94 exist?

The above mentioned facts have been known since 1999 (see \cite{Schmidt:1999,Ito-IV}) and apparently no progress
has been made so far in the search for NG-pairs of  
order $v=2t$, for $t$ in the above list. 
For generalizations where the cyclic group $\bZ_{4t}$ is replaced by more general finite abelian groups see \cite{SchmidtTan:2014}.

Since the known infinite series of NG-pairs are rather sparse, 
it is hard to believe that NG-pairs exist for all even lengths. 
In other words, in our opinion Ito's conjecture is likely to be 
false.

\section{Weighing matrices of 2N-type} 
\label{sec:WeighingM}

A {\em weighing matrix} of order $n$ and {\em weight} $w$ 
(abbreviated as $W(n,w)$) is a matrix $W$ of order $n$ with entries in $\{0,\pm1\}$ such that $WW^T=wI$. 
In this section we discuss the existence of weighing matrices 
of 2N-type.

Note that C-matrices of order $v$ are $W(v,v-1)$. It is 
known that there are no cyclic $W(v,v-1)$ for $v>2$
\cite{StaMul}. 
On the other hand there are infinitely many negacyclic 
$W(v,v-1)$. Indeed each Paley C-matrix is equivalent to 
a negacyclic C-matrix. 
It has been conjectured \cite{DGS} that there are no 
negacyclic C-matrices of even order $v\ne1+q$, $q$ a prime 
power. This conjecture has been verified for $v\le226$.
However, there exist C-matrices of 2N-type whose order $v$ 
is not of that form. For instance, they exist for 
$$
v=16,40,52,56,64,88,96,120,136,144,160.
$$
(See part (iii) of the proposition below.)

We have four infinite series of 2N-type weighing matrices. 

\begin{proposition} \label{teor:tezmat}
Let $q$ be an odd prime power. Then there exist weighing matrices of 2N-type: 

(i) $W(1+q,q)$;

(ii) $W(2+2q,2q)$;

(iii) if $q\equiv 3 \pmod{4}$, $W(2+2q,1+2q)$ and $W(4+4q,2+4q)$. 
\end{proposition}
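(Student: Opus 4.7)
The plan is to derive all four weighing matrices from the negacyclic Paley conference matrix $C$ of order $1+q$, whose existence is given by Proposition~\ref{Paley-Nega}(i); write its first row as $c=(0,c_1,\ldots,c_q)$, so that $CC^T=qI$ and $\NAF_c(k)=0$ for $0<k<1+q$.

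For~(i), split $c$ into the even- and odd-indexed subsequences $a'=(0,c_2,c_4,\ldots,c_{q-1})$ and $b=(c_1,c_3,\ldots,c_q)$ of length $v_0=(1+q)/2$. The identity $\NAF_{a'}(k)+\NAF_b(k)=\NAF_c(2k)=0$---already used in the proofs of Propositions~\ref{stav-2C} and~\ref{prop:Paley-2}---shows that $(a',b)$ is an N-complementary ternary pair of total weight~$q$, and plugging the corresponding negacyclic blocks into~(\ref{mat-H}) yields $W(1+q,q)$ of 2N-type. For~(ii), set $A=B=C$: any two negacyclic matrices of the same order commute, so the off-diagonal blocks of the 2N-array vanish; the diagonal blocks equal $2CC^T=2qI$, and each row contains $2q$ nonzero entries, giving $W(2+2q,2q)$ of 2N-type.

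For~(iii), assume $q\equiv 3\pmod 4$, so that $v:=1+q\equiv 0\pmod 4$; Belevitch's theorem~(\ref{jed-Bel}) then sharpens to $c_{v-k}=(-1)^k c_k$ for $0<k<v$, so in particular $c_k=c_{v-k}$ whenever $k$ is even. The matrix $W(2+2q,1+2q)$ is a conference matrix of order $2+2q$, and producing a 2N-type realisation is equivalent to producing an NCT-pair $(a,b)$ of length $v$ with $a$ ternary of weight $q$ having its sole zero at position~$0$ (forced by the vanishing of the diagonal entries of a 2N-type conference matrix), $b$ binary of weight $1+q$, and $\NAF_a(k)+\NAF_b(k)=0$ for $0<k<v$. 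I would construct such a pair by choosing a ternary negacyclic first row $a$ obtained from $c$ by a carefully chosen sign and/or shift modification (taking $a=c$ outright fails, since it would force $b$ to be a nonexistent negacyclic Hadamard of order $v>2$), paired with a binary companion $b$ whose NAF exactly cancels $\NAF_a$; the Belevitch identity $c_{v-k}=(-1)^k c_k$ is the structural input that closes the bookkeeping by splitting the NAFs into even- and odd-shift parts. The second assertion $W(4+4q,2+4q)$ then follows immediately from the first by applying the ``multiplication by~2'' map $\NCTP_{v,w}\to\NCTP_{2v,2w}$ (defined at the end of Section~\ref{sec:Golay}) to the pair constructed above: the doubled pair has length $2+2q$ and weight $2+4q$, and inserting it into~(\ref{mat-H}) produces the desired weighing matrix.

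The main obstacle is producing the explicit pair $(a,b)$ in the first assertion of~(iii). The ternary $a$ must deviate from $c$ in such a way that its NAF becomes nonzero on an exactly cancellable pattern, while still keeping its single zero at position~$0$; the binary $b$ must then supply the precisely opposite NAF profile. The Belevitch symmetry specific to $v\equiv 0\pmod 4$ is the crucial ingredient that makes this possible, and it is precisely this structural feature that forces the hypothesis $q\equiv 3\pmod 4$.
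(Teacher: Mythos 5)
Your parts (i) and (ii) are correct, though they take a somewhat different route from the paper. For (i) the paper argues by cases: for $q\equiv 1\pmod 4$ it invokes the even/odd splitting of $c$ from the proof of Proposition~\ref{stav-2C}, while for $q\equiv 3\pmod 4$ it instead takes an Ito NG-pair of length $(1+q)/2$ with quasi-symmetric first sequence, forms the skew-Hadamard matrix (\ref{mat-H}), and replaces the diagonal by zeros. Your uniform splitting works for both residues, since the identity $\NAF_{a'}(k)+\NAF_b(k)=\NAF_c(2k)$ does not depend on $v\bmod 4$ (the paper itself uses it in both Propositions~\ref{stav-2C} and~\ref{prop:Paley-2}), so your (i) is a slightly cleaner single argument. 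For (ii) the paper multiplies the weight-$q$ NCT-pair from (i) by $2$; your direct choice $A=B=C$ is a valid alternative because negacyclic matrices commute.

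Part (iii) contains a genuine gap. You correctly reduce the second assertion to the first via doubling, but you never construct the NCT-pair needed for $W(2+2q,1+2q)$: you describe it as ``the main obstacle'' and offer only the hope that a sign/shift modification of $c$ at length $1+q$, controlled by the Belevitch identity, will work. That is not the mechanism, and there is no evidence the plan closes. The paper's construction drops to half length: take $(a,b)\in\NGP_{(1+q)/2}$ from the Ito (equivalently, second Paley) series with $a$ quasi-symmetric, multiply by $2$ to get $(a',b')\in\NGP_{1+q}$ with $a'=(1,a'')$ still quasi-symmetric, and then replace the leading $1$ by $0$. Quasi-symmetry is exactly what makes the last step harmless: as in the proof of Proposition~\ref{prop:Paley-2}, changing the first entry perturbs $\NAF_{a'}(k)$ by $x(a'_k-a'_{v-k})=0$, so $((0,a''),b')$ is still N-complementary, of total weight $q+(1+q)=1+2q$, and (\ref{mat-H}) yields a conference matrix of order $2+2q$. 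Note also that the hypothesis $q\equiv 3\pmod 4$ enters by guaranteeing the existence of the half-length NG-pair, not through the Belevitch symmetry of $c$ at length $1+q$ as your sketch anticipates.
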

\begin{proof}
(i) If $q\equiv 1 \pmod{4}$, this was shown in the proof of 
Proposition \ref{stav-2C}. Otherwise the claim follows from 
the fact, proven in section \ref{sec:Ito}, that there exists an 
NG-pair $(a,b)$ of length $(1+q)/2$ with $a$ quasi-symmetric. 
Let $A$ and $B$ be the negacyclic matrices with first rows $a$ and $b$. We may assume that $a_0=1$, then the matrix 
(\ref{mat-H}) is skew-Hadamard of 2N-type. By replacing the 
diagonal entries with 0s, we obtain a $W(1+q,q)$. 

(ii) This follows from (i) because we can ``multiply by 2''.

(iii) Let $(a,b)$ be an Ito NG-pair of length $(1+q)/2$. 
By multiplying by 2, we obtain an NG-pair $(a',b')$ of length 
$1+q$ with $a'=(1,a'')$ quasi-symmetric.
Consequently, the pair $((0,a''),b')$ is N-complementary.
The corresponding 2N-type matrix (\ref{mat-H}) is a C-matrix 
of order $2+2q$. Multiplying by 2 we obtain also an 
$W(4+4q,2+4q)$.
\end{proof}

This proposition covers all weighing matrices $W(4n,4n-1)$ and
$W(4n,4n-2)$ of 2N-type, for $n\le 50$ except for 
$$
n=9,13,19,23,25,28,29,31,37,39,43,44,46,47,48,49
$$
and
$$
n=11,17,18,26,29,33,35,38,39,43,46,47,50,
$$
respectively. We have constructed five of these matrices:

$$
\begin{array}{rl}
n & a~\&~b \\
\hline \\
11 &
[0,-,-,+,-,-,-,-,-,+,+,+,-,+,+,-,+,-,+,+,+,-], \\&
[0,+,-,-,-,-,+,-,-,+,-,+,+,+,+,-,-,-,+,-,+,+] \\
13 & 
[0,+,+,-,+,-,+,+,+,+,-,+,+,-,+,+,-,+,+,+,+,-,+,-,+,+],\\&
[+,+,+,-,+,-,-,-,+,-,-,-,+,+,+,-,+,+,-,-,+,+,+,+,-,-] \\
17 &
[0,-,-,-,+,-,-,+,-,+,-,-,-,+,-,+,+,+,-,+,+,+,+,-,+,+,+,+,+, \\&
-,-,-,+,-],~[0,-,+,+,+,+,-,+,-,-,-,+,+,+,+,-,-,+,+,-,-,+,-, \\&
+,+,-,+,+,+,+,-,+,-,-] \\
18 & 
[0,+,+,-,-,-,+,-,-,+,-,-,-,-,-,-,+,+,+,-,+,+,-,+,-,+,-,-,-, \\&
+,+,+,-,+,+,-],~[0,+,+,-,-,+,+,+,+,-,+,-,-,-,+,-,+,+,+,-,+, \\&
+,+,+,-,+,+,+,+,-,+,-,-,+,+,-] \\
26 &
[0,+,+,+,+,-,-,-,+,-,-,-,+,+,+,+,+,-,+,-,-,+,+,-,+,-,-,-,+, \\&
+,+,+,-,+,+,-,+,+,+,+,+,+,-,-,+,+,-,-,+,-,+,-],~[0,+,+,+,+, \\&
+,-,-,+,+,-,-,+,-,+,-,+,+,+,-,-,-,+,-,+,+,-,+,+,+,+,-,-,+, \\&
+,+,+,-,+,-,+,+,-,+,+,+,-,+,+,-,+,-].
\end{array} 
$$

Multiplication by Golay pairs may be used to construct other
series of weighing matrices of 2N-type. 

In Appendix E we list weighing matrices $W(4n,4n-2)$ of 4C-type 
for odd $n\le 21$. They can be easily converted to 4N-type by replacing each circulant block $X$ of order $n$ with the negacyclic block $ZXZ$.

\section{Acknowlegdements}
The second author wishes to acknowledge generous support by NSERC. This work was made possible by the facilities of the Shared Hierarchical Academic Research Computing Network (SHARCNET) and Compute/Calcul Canada.

\section{Appendix A} \label{app:A}

For even integers $v=1+q\le128$, with $q=p^n$ a power of a prime 
$p$, we give the first row $c$ of a negacyclic conference matrix 
$C$ of order $v$ belonging to the equivalence class of Paley 
conference matrices. The algorithm is described in section 
\ref{sec:Paley-C}, it is based on \cite[Corollary 7.2]{DGS}. We also record the primitive polynomial $f(x)$ of degree $2n$ over $\GF(p)$ used in the computation.

$$
\begin{array}{rl}
 v & f(x);~p,q \text{ and the first row } c \\
\hline \\
 4 & x^2+x+2;~ p=q=3 \\ &
[0,+,-,-] \\
 6 & x^2+x+2;~ p=q=5 \\ &
[0,+,+,+,-,+] \\
 8 & x^2+x+3;~ p=q=7 \\ &
[0,+,-,-,-,+,-,-] \\
10 & x^4+x^3+2;~ p=3,~ q=9 \\ &
[0,+,-,-,-,-,+,-,+,+] \\
12 & x^2+x+7;~ p=q=11 \\ &
[0,+,-,+,-,-,+,+,-,-,-,-] \\
14 & x^2+x+2;~ p=q=13 \\ &
[0,+,+,+,+,+,-,-,+,+,-,+,-,+] \\
18 & x^2+x+3;~ p=q=17 \\ &
[0,+,+,+,-,+,+,+,+,-,-,+,-,+,+,+,-,+] \\
20 & x^2+x+2;~ p=q=19 \\ &
[0,+,-,-,-,-,-,+,-,-,+,+,-,-,-,+,-,+,-,-] \\
24 & x^2+x+7;~ p=q=23 \\ &
[0,+,-,-,+,+,+,+,+,+,-,+,+,-,-,-,+,-,+,-,+,+,-,-] \\
26 & x^4+x^3+x+3;~ p=5,~ q=25 \\ &
[0,+,-,-,-,+,+,+,-,-,-,-,-,-,+,-,+,-,+,+,-,+,+,-,+,+] \\
28 & x^6+x^5+2;~ p=3,~ q=27 \\ &
[0,+,+,-,-,+,-,-,+,+,+,+,+,+,+,-,+, -,+,-,+,+,-,-,-,+,+,-] \\
30 & x^2+x+3;~ p=q=29 \\ &
[0,+,+,-,+,+,+,+,+,+,-,-,-,-,+,+,-,-,+,-,+,+,-,+,-,+,-,-, \\ &
-,+] \\
32 & x^2+x+12;~ p=q=31 \\ & 
[0,+,-,+,-,+,+,+,+,-,-,-,+,+,-,-,-,+,-,-,+,+,-,+,+,-,+,-, \\ & 
 -,-,-,-] \\
38 & x^2+x+5;~ p=q=37 \\ & 
[0,+,+,+,+,+,-,+,-,+,+,-,+,+,+,+,-,-,+,+,-,-,+,+,-,+,-,-, \\ &
-,+,+,+,+,+,-,+,-,+] \\
42 & x^2+x+12;~ p=q=41 \\ & 
[0,+,+,-,+,-,+,+,+,+,-,+,-,-,+,+,+,-,-,-,+,-,-,-,+,-,-,+, \\ &
-,-,+,+,+,+,-,+,-,-,-,-,-,+] \\
44 & x^2+x+3;~ p=q=43 \\ & 
[0,+,-,+,-,+,-,+,+,-,-,+,-,-,+,-,+,+,-,+,+,+,-,-,+,-,-,-, \\ &
+,+,+,+,-,-,-,+,+,-,-,-,-,-,-,-] \\
48 & x^2+x+13;~ p=q=47 \\ & 
[0,+,-,-,+,+,+,-,-,+,+,+,-,+,-,+,-,+,+,+,+,-,+,-,-,+,+,+, \\ &
+,-,+,-,-,-,-,-,-,-,+,-,-,+,+,-,+,+,-,-] \\
\end{array} 
$$

$$
\begin{array}{rl}
50 & x^4+x^3+x^2+3;~ p=7,~ q=49 \\ & 
[0,+,-,-,-,+,+,-,+,-,+,-,-,-,+,-,+,-,-,+,+,+,-,-,-,-,+,-, \\ &
+,+,-,+,+,-,-,-,-,-,+,-,-,-,-,-,-,+,+,-,+,+] \\
54 & x^2+x+5;~ p=q=53 \\ & 
[0,+,+,+,+,+,-,+,-,-,-,+,-,+,+,-,-,+,-,+,-,+,+,-,+,+,-,-, \\ &
+,+,-,-,-,+,+,+,+,+,+,-,-,+,+,+,+,-,+,+,+,+,-,+,-,+] \\
60 & x^2+x+2;~ p=q=59 \\ & 
[0,+,-,-,+,-,-,+,+,-,-,-,-,-,-,+,+,+,-,+,-,+,-,-,-,+,-,+, \\ &
+,-,+,+,+,-,-,-,-,+,-,-,-,-,-,-,+,-,-,+,-,+,-,+,+,-,-,+,  \\ &
+,+,-,-] \\
62 & x^2+x+2;~ p=q=61 \\ & 
[0,+,+,+,+,+,+,-,+,-,-,-,+,+,-,-,-,+,-,-,-,-,+,+,+,-,-,-, \\ &
+,-,+,+,-,-,-,-,+,-,-,+,-,-,+,-,+,+,+,-,+,+,-,-,+,-,-,-,  \\ &
-,+,-,+,-,+] \\
68 & x^2+x+12;~ p=q=67 \\ & 
[0,+,-,+,+,-,+,+,+,+,-,+,+,+,-,+,+,-,-,-,-,-,+,+,-,-,-,-, \\ &
-,+,-,+,+,-,+,+,+,-,-,-,-,+,-,+,-,-,+,+,-,+,-,+,+,-,-,-,  \\ &
+,-,-,-,+,-,+,+,+,-,-,-] \\
72 & x^2+x+11;~ p=q=71 \\ & 
[0,+,-,-,-,+,-,-,-,-,-,+,+,+,+,-,-,-,+,+,-,-,+,-,+,-,+,+, \\ &
+,+,-,+,+,-,+,+,+,-,+,+,+,-,-,-,+,-,+,+,+,+,+,+,-,-,+,+,  \\ &
-,+,+,-,+,-,-,+,-,+,-,-,-,+,-,-] \\
74 & x^2+x+11;~ p=q=73 \\ & 
[0,+,+,-,-,+,-,+,+,+,-,+,-,+,-,+,-,-,+,-,-,-,-,+,-,+,+,+, \\ &
-,-,+,+,+,-,-,-,-,+,+,-,+,-,-,+,-,-,+,+,-,+,+,+,+,-,+,-,  \\ &
-,-,+,+,+,+,+,+,+,+,-,+,+,+,+,-,-,+] \\
80 & x^2+x+3;~ p=q=79 \\ & 
[0,+,-,-,+,+,-,+,-,+,-,+,-,-,+,-,-,-,+,-,-,-,-,+,-,+,+,-, \\ &
-,-,-,+,-,+,-,-,+,+,+,-,-,+,+,-,+,+,-,-,-,-,-,+,-,+,+,-,  \\ &
-,-,-,+,-,+,+,+,-,+,+,+,-,-,-,-,-,-,-,-,+,+,-,-] \\
82 & x^8+x^5+2;~ p=3,~ q=81 \\ & 
[0,+,+,-,-,+,-,+,+,+,-,+,+,-,+,-,+,-,+,+,+,-,+,+,-,-,-,-, \\ &
+,+,+,+,+,-,+,+,-,+,+,+,+,-,-,+,-,+,+,+,-,-,-,+,-,+,-,-,  \\ &
+,-,+,+,-,-,-,+,-,-,-,-,-,-,-,+,+,+,-,+,+,+,+,-,-,+] \\
84 & x^2+x+2;~ p=q=83 \\ & 
[0,+,-,-,+,-,+,+,+,-,+,+,+,-,-,+,-,-,+,+,-,+,+,-,+,-,+,-, \\ &
+,-,-,+,+,-,+,-,-,-,-,-,+,-,+,+,+,+,-,+,-,+,+,+,+,-,-,+,  \\ &
+,+,+,+,+,+,+,-,-,-,+,+,-,-,-,+,+,-,+,+,+,-,+,+,+,+,-,-] \\
\end{array} 
$$

$$
\begin{array}{rl}
90 & x^2+x+6;~ p=q=89 \\ & 
[0,+,+,+,+,-,+,-,+,+,+,-,-,-,+,-,-,+,-,+,+,+,-,+,+,+,-,-, \\ &
+,+,-,+,-,-,-,+,+,-,+,-,+,+,-,+,-,+,+,+,+,+,-,-,-,-,-,+,  \\ &
+,-,+,+,+,+,-,-,+,+,-,+,+,+,-,+,+,+,+,-,-,-,+,-,-,+,-,-,  \\ &
-,-,-,+,-,+] \\
98 & x^2+x+5;~ p=q=97 \\ & 
[0,+,+,+,+,+,-,+,+,-,+,+,-,+,-,-,+,+,-,-,-,+,+,-,+,-,+,-, \\ &
-,+,-,+,+,+,-,+,-,-,-,-,-,+,+,+,-,+,+,+,-,+,+,+,-,+,+,+,  \\ &
-,+,+,-,+,-,+,+,+,+,-,+,+,+,+,-,-,-,-,-,-,+,+,-,+,+,-,-,  \\ &
+,+,+,+,-,-,-,+,+,+,-,+,-,+] \\
102 & x^2+x+3;~ p=q=101 \\ & 
[0,+,+,-,+,+,+,+,-,-,+,-,-,-,-,-,+,+,+,+,+,+,+,+,-,+,+,-, \\ &
+,+,-,+,-,-,-,+,+,+,-,-,-,-,-,+,-,+,+,+,-,-,+,+,-,-,+,+,  \\ &
-,+,+,+,+,-,+,-,+,+,-,+,+,-,+,+,+,+,-,-,-,+,+,+,-,+,-,+,  \\ &
-,+,-,-,+,-,+,-,-,-,+,+,-,+,-,-,-,+] \\
104 & x^2+x+5;~ p=q=103 \\ & 
[0,+,-,-,+,-,-,+,-,+,-,+,-,-,-,-,+,-,-,-,+,-,+,+,+,-,+,-, \\ &
-,-,-,-,-,+,+,-,+,+,-,-,+,+,+,-,+,-,-,+,+,+,+,-,-,+,+,-,  \\ &
+,-,-,+,+,+,+,-,+,+,-,-,+,+,+,-,-,+,-,+,-,+,+,+,+,-,+,+,  \\ &
+,+,-,+,+,+,-,+,-,-,-,-,-,-,-,+,+,+,-,-] \\
108 & x^2+x+5;~ p=q=107 \\ & 
[0,+,-,-,+,+,-,+,-,+,-,-,-,-,-,+,+,+,+,+,-,-,-,-,+,-,+,-, \\ &
-,-,+,-,-,-,-,+,-,+,-,-,+,+,-,-,+,-,-,+,+,+,-,-,-,+,-,-,  \\ &
-,+,-,-,+,-,-,+,+,+,-,-,+,+,-,-,-,-,-,+,-,+,+,+,-,+,+,+,  \\ &
+,+,-,+,-,-,+,-,+,-,-,+,-,+,-,-,-,-,-,-,+,+,-,-] \\
110 & x^2+x+6;~ p=q=109 \\ & 
[0,+,+,+,-,-,-,+,-,+,+,+,-,+,-,+,-,+,+,+,-,+,+,-,+,-,-,+, \\ &
+,-,+,+,-,-,-,+,-,+,-,-,+,+,-,+,+,+,+,-,-,-,-,-,-,+,+,+,  \\ &
-,+,+,-,+,-,+,-,-,+,-,+,+,+,-,-,+,+,+,+,+,-,+,+,-,-,-,+,  \\ &
+,-,-,-,-,+,+,+,-,+,+,+,+,+,+,+,-,+,+,+,+,-,+,+,-,+] \\
114 & x^2+x+10;~ p=q=113 \\ & 
[0,+,+,+,-,-,-,-,+,-,-,+,+,-,+,+,-,+,-,-,-,+,-,+,-,-,-,-, \\ &
-,+,+,+,+,+,-,+,+,-,+,-,+,-,-,-,-,-,+,+,-,-,+,-,-,-,+,-,  \\ &
-,+,+,-,-,-,+,-,-,-,+,+,-,-,+,-,+,-,-,-,-,-,-,+,+,+,-,+,  \\ &
-,+,+,-,+,-,+,+,+,+,+,-,+,+,+,+,-,-,-,+,+,-,-,-,+,-,+,+,  \\ &
-,+] \\
\end{array} 
$$

$$
\begin{array}{rl}
122 & x^4+x^3+8;~ p=11,~ q=121 \\ & 
[0,+,-,-,-,-,-,-,-,+,+,-,+,-,-,-,+,-,-,+,+,-,+,-,-,+,+,-, \\ &
+,-,-,-,-,+,-,+,-,-,-,-,+,-,-,-,+,+,+,+,-,+,-,-,-,+,+,+,  \\ &
-,-,+,-,-,-,+,-,-,-,+,+,-,+,+,-,+,+,+,+,-,+,-,-,+,-,-,-,  \\ &
+,-,+,+,+,+,+,-,+,-,-,-,-,+,+,-,-,-,-,+,+,-,-,-,+,-,-,-,  \\ &
-,+,+,-,+,-,+,-,+,+] \\
128 & x^2+x+3;~ p=q=127 \\ & 
[0,+,-,-,-,+,-,+,+,-,-,-,+,+,-,-,-,-,+,-,-,-,-,+,-,+,+,+, \\ &
+,+,-,+,+,-,+,+,-,-,-,+,+,+,+,-,+,+,+,-,+,+,+,+,+,+,-,-,  \\ &
+,-,+,-,+,+,-,-,-,+,-,-,+,+,+,+,+,+,-,-,+,-,+,-,+,+,+,-,  \\ &
+,+,+,-,+,-,-,+,-,-,+,+,+,-,-,-,+,-,+,-,-,-,-,+,-,+,+,+,  \\ &
-,+,-,-,+,+,-,+,+,-,-,-,-,+,-,-] \\
\end{array} 
$$

\section{Appendix B} \label{app:B}

For even integers $v=1+q\le128$, with $q \equiv 1 \pmod{4}$ 
a prime power, we give a 2C-type conference matrix of order $v$
with symmetric blocks $A$ and $B$ which belongs to the equivalence class of Paley conference matrices. The algorithm is described in section \ref{sec:Paley-1}. Since the blocks $A$ and $B$ are symmetric circulants of odd order $v/2$, we record only the first $(v+2)/4$ elements of their first rows $a$ and $b$.

We recall that $A+I,A-I,B,B$ are four Williamson matrices of 
order $v/2$ belonging to the Turyn series.

$$
\begin{array}{rl}
v   & \text{first rows of $a$ and $b$ (truncated)} \\
\hline \\
6   & [0,-],~[-,+] \\
10  & [0,+,-],~[-,+,+] \\
14  & [0,-,+,+],~[+,+,-,+] \\
18  & [0,-,-,-,+],~[-,-,+,-,+] \\
26  & [0,+,-,-,-,+,-],~[-,+,-,-,+,+,+] \\
30  & [0,-,+,-,+,+,-,-],~[-,-,+,+,-,+,+,+] \\
38  & [0,-,+,+,-,-,+,-,-,-],~[-,-,-,+,+,+,-,+,-,+] \\
42  & [0,-,+,-,+,+,-,-,+,+,+],~[-,+,-,-,-,-,+,-,-,+,+]  \\
50  & [0,+,-,-,+,-,-,-,+,+,+,+,-],~
      [-,+,+,-,-,+,-,+,-,+,+,+,+] \\
54  & [0,-,+,+,-,+,-,-,-,+,-,-,+,+],~
      [+,+,+,+,-,+,+,+,-,-,-,+,-,+] \\
62  & [0,-,+,-,+,+,+,+,-,+,-,-,+,+,+,-], \\&
      [-,-,+,-,-,-,+,+,+,+,+,-,+,+,-,+] \\ 
74  & [0,-,-,+,+,+,-,+,-,-,-,+,-,-,-,-,+,+,-], \\&
      [+,+,-,-,-,-,+,-,-,+,-,-,+,-,+,-,+,+,+] \\
82  & [0,-,-,+,+,+,+,-,+,-,+,-,-,+,+,-,+,-,-,-,+], \\&
      [-,-,+,-,-,-,+,+,-,-,-,-,-,+,-,-,+,-,+,+,+] \\
90  & [0,-,+,-,+,-,-,-,-,+,+,+,+,+,+,+,-,+,+,-,+,+,-], \\&
      [+,-,+,+,-,-,-,-,+,+,+,-,+,-,+,+,-,+,+,+,-,-,+] \\
98  & [0,-,+,+,+,-,-,+,+,+,-,-,+,-,-,+,+,+,-,+,-,-,-,-,-], \\&
      [+,-,+,-,+,+,-,-,+,-,+,+,-,+,+,+,+,+,+,-,-,-,+,-,+] \\
102 & [0,-,+,-,-,-,-,+,+,-,+,-,-,-,+,+,-,+,+,+,-,+,-,-,-,-],\\&
      [-,-,-,+,-,-,+,+,-,-,-,+,+,+,-,+,-,+,+,-,+,-,-,+,+,+] \\
110 & 
[0,-,-,+,-,-,-,+,-,-,-,-,+,+,+,-,-,+,-,+,+,+,+,-,-,+,-,-], \\&
[-,+,+,-,+,+,-,+,+,+,-,-,-,-,-,-,+,+,-,+,-,+,-,+,-,-,-,+] \\
114 & 
[0,-,-,+,+,+,+,-,-,+,-,+,-,+,-,-,+,+,+,-,+,+,-,-,-,-,-,-,-],\\&
[+,+,-,+,-,-,-,+,-,+,-,-,+,-,+,+,-,-,+,+,+,-,-,-,-,+,-,-,+] \\
122 & 
[0,+,-,+,-,-,+,+,+,+,+,-,-,-,+,+,-,+,-,+,+,+,+,-,-,+,-,-,-,-,-]\\&
[-,+,-,-,+,+,+,-,+,+,-,+,-,-,+,+,-,+,+,+,-,-,-,+,-,+,+,+,-,+,+]\\
\end{array} 
$$

\section{Appendix C} \label{app:C}

For integers $q=4t-1$, with $q=p^n\equiv 3 \pmod{4}$ a power of a prime $p$, we give the NG-pairs $(a,b)$ of length $v=2t\le 64$ belonging to the second Paley series. The procedure used to generate this list is described in section \ref{sec:Paley-2}. 

The sequence $a$ is quasi-symmetric and $b$ is skew-symmetric.  We record only the first $t+1$ terms of $a$ 
and the first $t$ terms of $b$. If $A$ and $B$ are the negacyclic blocks with first rows $a$ and $b$, then the 
matrix (\ref{mat-H}) is 2N-type skew-Hadamard.

$$
\begin{array}{rl}
v &  a~\&~b \text{ (truncated)} \\
\hline \\
2 & [+,-],~[+] \\
4 & [+,-,-],~[+,-] \\
6 & [+,-,-,+],~[+,+,-] \\
10 & [+,-,-,-,-,+],~[+,-,-,+,-] \\
12 & [+,-,+,+,+,-,+],~[+,-,+,+,+,+] \\
14 & [+,+,-,-,+,+,+,+],~[+,-,+,-,+,+,+] \\
16 & [+,-,-,+,+,-,+,-,-],~[+,+,+,+,-,-,+,-] \\
22 & [+,-,-,-,+,-,-,+,+,-,+,-],~[+,+,+,+,-,+,-,-,+,+,+] \\
24 & [+,-,+,+,-,+,-,-,-,+,+,+,-],~[+,-,+,-,+,+,+,+,+,+,-,-] \\
30 & [+,-,+,-,+,-,-,-,+,-,-,-,-,-,+,+],~ \\&
[+,-,-,+,-,-,-,+,+,+,+,-,+,+,-] \\
34 & [+,-,+,+,+,-,+,-,+,-,-,+,-,-,-,-,+,+],~ \\& 
[+,+,-,+,+,+,+,+,-,-,-,+,-,-,+,+,-] \\
36 & [+,-,-,-,-,-,+,+,-,+,-,+,+,+,+,-,+,+,+],~ \\& 
[+,-,+,-,-,+,+,-,-,+,-,-,-,+,+,+,-,+] \\
40 & [+,-,+,-,-,-,-,+,-,+,-,-,-,+,-,-,-,-,+,+,-],~ \\& 
[+,-,+,+,+,+,-,-,-,-,-,+,+,-,-,+,+,-,+,-] \\
42 & [+,-,+,+,+,+,+,-,-,+,-,+,+,+,+,-,+,+,-,-,+,+],~ \\& 
[+,-,-,+,-,+,-,+,-,+,+,-,-,-,-,+,-,-,-,-,-] \\
52 & [+,-,+,-,-,-,-,-,+,-,+,+,+,+,-,-,-,+,+,-,+,+,+,-,+,+,-],~ \\&
[+,-,-,+,+,+,-,-,-,-,-,+,-,-,-,-,+,-,+,-,+,-,-,+,+,-] \\
54 & [+,-,+,-,-,-,-,-,+,+,-,-,+,+,-,+,-,-,-,-,+,-,+,-,+,-,-,-],~ \\&
[+,-,+,+,+,-,-,+,+,+,-,-,-,-,-,-,-,+,+,-,+,-,-,+,+,-,+] \\
64 &
[+,-,-,-,+,-,+,-,-,+,-,-,-,+,+,-,+,+,-,-,+,+,+,+,+,+,+,-, \\&
+,+,+,-,-],~[+,-,+,+,-,-,+,-,-,-,-,+,+,+,+,+,-,+,-,+,+,-, \\&
+,-,+,+,+,-,-,-,+,-] \\
\end{array} 
$$

\section{Appendix D} \label{app:D}

For integers $q=4t-1$, with $q=p^n\equiv 3 \pmod{4}$ a power of a prime $p$, we give the NG-pairs $(a,b)$ of length 
$v=2t\le 154$ belonging to the Ito series. The procedure used to 
generate this list is described in section \ref{sec:Ito}. 
In the list below, for each length $v$, we record the primitive polynomial $f(x)$ of degree $2n$ over $\GF(p)$ used in the computation, and the NG-pair $(a,b)$.

In all cases we have $a=(+,a')$ where the subsequence $a'$ is symmetric while the whole sequence $b$ is skew-symmetric. 
We record only the first $t+1$ terms of $a$ and the first $t$ terms of $b$. If $A$ and $B$ are the negacyclic blocks with first rows $a$ and $b$, then the matrix (\ref{mat-H}) is skew-Hadamard 
of 2N-type.

Moreover, by multiplying the NG-pair $(a,b)$ by 2, we obtain in the same way a 2N-type skew-Hadamard matrix of order $1+q$.

$$
\begin{array}{rl}
v & a~\&~b \text{ (truncated)} \\
\hline \\
2 & x^2-x-1;~ p=q=3 \\&
[+,+],~ [+]\\
4 & x^2-x+3;~ p=q=7 \\&
[+,-,+],~ [+,+] \\
6 & x^2+x+7;~ p=q=11 \\&
[+,-,+,+],~ [-,-,+] \\
10 & x^2-x+2;~ p=q=19 \\&
[+,-,+,-,+,+],~ [+,+,+,-,-] \\
12 & x^2-x+7;~ p=q=23 \\&
[+,+,+,-,+,+,+],~ [+,-,+,-,-,-] \\
14 & x^6-x^5+2;~ p=3,~ q=27 \\&
[+,+,+,-,-,+,-,+],~ [-,+,-,-,-,-,-] \\
16 & x^2-x+12;~ p=q=31 \\&
[+,-,+,+,-,-,-,-,+],~ [+,+,+,-,-,+,-,+] \\
22 & x^2+x+3;~ p=q=43 \\&
[+,+,-,+,+,+,-,-,+,+,+,+],~ [-,+,-,-,+,+,+,+,-,+,-] \\
24 & x^2+x+13;~ p=q=47 \\&
[+,-,-,+,+,+,+,-,+,+,-,+,+],~ [-,+,+,-,+,-,+,-,-,-,-,-] \\
30 & x^2+x+2;~ p=q=59 \\&
[+,-,-,-,-,-,+,-,-,-,+,-,+,-,-,+],~ \\&
[-,-,+,+,+,-,+,-,-,+,-,-,-,+,+] \\
34 & x^2+x+12;~ p=q=67 \\&
[+,-,-,+,-,-,-,-,-,-,+,+,+,-,+,-,-,+] \\&
[-,-,+,+,-,-,-,+,-,-,+,-,+,-,-,-,+] \\
36 & x^2+x+11;~ p=q=71 \\&
[+,+,-,+,-,+,+,-,-,-,-,-,+,-,+,+,+,-,+], \\&
[-,-,-,+,-,-,+,-,-,-,+,+,-,-,+,+,+,+] \\
40 & x^2+x+3;~ p=q=79 \\&
[+,-,-,-,+,-,+,+,+,+,+,-,+,+,+,-,+,-,-,+,+], \\&
[-,-,-,-,+,+,-,-,+,+,-,+,-,+,+,-,+,-,-,-] \\
42 & x^2+x+2;~ p=q=83 \\&
[+,-,-,+,-,+,-,-,+,+,+,+,-,+,-,-,-,+,+,-,-,+], \\&
[-,+,-,+,-,-,-,+,-,+,+,-,-,-,-,-,-,-,-,+,+] \\
52 & x^2+x+5;~ p=q=103 \\&
[+,-,-,-,+,-,+,-,-,-,-,+,-,+,+,-,+,+,-,-,-,+,-,-,-,+,+], \\&
[-,-,+,+,-,-,-,-,-,-,-,+,-,+,+,+,-,+,-,+,+,-,+,+,-,-] \\
54 & x^2+x+5;~ p=q=107 \\&
[+,+,+,+,-,+,-,+,+,-,-,+,+,-,-,-,-,+,-,+,+,+,+,+,+,+,-,+], \\&
[-,-,-,+,+,-,-,-,-,+,+,-,+,-,+,-,+,+,-,+,+,-,-,+,-,-,-] \\
\end{array} 
$$

$$
\begin{array}{rl}
64 & x^2-x+3;~ p=q=127 \\&
[+,-,+,-,-,-,-,-,+,+,+,-,+,+,-,-,-,+,+,-,-,+,-,+,-,+,-,-, \\&
-,+,-,-,+],~[+,+,+,-,+,+,-,+,+,+,+,+,-,-,-,-,-,+,-,+,-,-, \\&
+,-,+,+,+,-,-,+,+,+] \\
66 & x^2-x+14;~ p=q=131 \\&
[+,-,-,-,-,+,+,-,+,-,+,-,-,+,+,-,-,-,+,+,-,+,-,+,+,+,+,+, \\&
-,+,+,-,-,+],~[+,+,+,+,+,+,+,-,-,-,+,-,+,+,-,+,-,-,-,+,+, \\&
-,+,-,+,+,+,-,-,-,+,-,-] \\
70 & x^2+x+2;~ p=q=139 \\&
[+,+,-,+,-,-,-,-,+,-,+,-,+,+,+,-,-,-,+,+,+,-,-,+,-,-,+,+, \\&
+,+,+,+,+,-,+,+],~[-,-,-,-,+,+,+,+,-,+,+,-,+,+,-,+,-,+,+, \\&
+,-,+,-,-,-,+,+,-,+,-,-,+,+,-,-] \\
76 & x^2+x+12;~ p=q=151 \\&
[+,-,+,+,+,-,-,+,-,-,+,-,+,+,-,+,-,-,-,+,-,+,+,+,+,+,+,-, \\&
-,+,+,-,-,-,-,-,+,-,+],~[-,+,+,+,-,-,-,+,-,+,+,-,-,-,-,-, \\&
+,-,-,-,-,+,-,+,-,+,+,-,-,-,-,+,-,-,-,+,-,-] \\
82 & x^2+x+11;~ p=q=163 \\&
[+,+,+,-,+,-,+,-,-,-,+,+,-,-,-,-,+,+,-,+,-,-,-,+,+,+,+,+, \\&
+,+,+,+,-,+,-,+,+,+,+,-,-,+],~[-,+,-,-,+,-,-,+,-,+,-,+,-, \\&
+,+,+,+,-,-,-,-,+,-,-,+,+,+,+,-,+,+,+,-,-,+,-,-,+,+,-,-] \\
106 & x^2+x+3;~ p=q=211 \\&
[+,-,-,-,+,+,-,+,-,-,-,-,-,+,-,-,-,+,+,+,+,-,-,+,+,-,-,-,-, \\&
-,-,+,-,-,+,+,-,-,-,-,+,+,-,+,+,-,+,-,+,+,-,+,-,+], \\&
[-,+,+,-,-,-,+,-,+,+,+,-,-,+,-,-,+,-,-,+,-,+,-,-,-,-,-,-,-, \\&
+,+,+,-,-,+,-,-,-,+,-,+,-,-,+,+,+,+,+,+,-,+,-,+] \\
142 & x^2+x+3;~ p=q=283 \\&
[+,+,-,+,-,-,-,-,+,-,-,-,-,-,+,+,-,-,-,-,+,+,+,-,+,+,+,-,+, \\&
-,+,-,+,-,-,+,-,+,-,-,-,-,+,+,-,+,+,-,+,+,-,+,-,-,-,+,-,-, \\&
+,+,+,+,-,+,+,-,-,+,+,-,+,+], \\&
[-,+,+,-,-,-,+,+,+,+,-,+,+,+,+,+,-,+,-,-,+,+,+,+,-,-,+,-,+, \\&
+,+,+,+,+,-,-,+,+,-,+,+,+,-,-,-,-,-,-,+,+,-,+,-,+,+,-,-,-, \\&
+,-,-,-,-,+,-,+,+,+,-,+,-] \\
154 & x^2+x+5;~ p=q=307 \\&
[+,+,+,-,-,+,-,-,+,+,+,-,+,-,+,+,-,+,-,-,+,+,-,-,+,+,-,+,-, \\&
-,-,-,-,-,+,+,-,-,-,-,-,-,-,-,-,-,+,-,-,+,-,+,+,+,+,-,+,+, \\&
-,-,-,-,+,+,-,-,-,-,+,+,+,+,-,+,-,+,-,+], \\&
[-,+,-,-,+,+,+,+,-,-,-,-,+,-,+,+,+,-,+,-,-,+,-,-,+,+,-,-,+, \\&
-,+,-,+,-,-,-,-,+,-,+,-,-,-,+,-,-,-,+,+,+,-,+,+,+,+,-,+,+, \\&
-,-,-,-,-,-,+,+,-,+,-,+,-,-,+,+,-,-,-] \\
\end{array} 
$$

\section{Appendix E} \label{app:E}

We list here the weighing matrices $W(4n,4n-2)$ of 4C-type for
odd $n\le 21$.

$$
\begin{array}{rl}
4n & a,b,c,d \\
\hline \\
4 & [0],~ [+],~ [0],~ [+] \\
12 & [0,+,+],~ [+,-,-],~ [0,-,-],~ [+,-,-] \\
20 & [0,+,+,+,+],~[+,+,-,-,+],~[0,+,-,-,+],~[+,-,+,+,-] \\
28 & 
[0,-,+,+,+,+,-],~[+,+,-,+,+,-,+], \\&
[0,-,+,-,-,+,-],~[+,+,+,-,-,+,+] \\
36 & 
[0,+,-,+,-,-,+,-,+],~[+,+,-,-,-,-,-,-,+], \\&
[0,-,+,+,-,-,+,+,-],~[+,+,+,-,+,+,-,+,+] \\
44 & 
[0,+,-,-,+,-,-,+,-,-,+],~[+,+,-,-,-,-,-,-,-,-,+], \\&
[0,+,-,+,-,+,+,-,+,-,+],~[+,+,+,-,-,+,+,-,-,+,+] \\
52 & 
[0,+,+,-,+,-,-,-,-,+,-,+,+],~[+,-,+,-,-,-,+,+,-,-,-,+,-], \\&
[0,-,-,+,+,+,-,-,+,+,+,-,-],~[+,-,-,+,-,-,-,-,-,-,+,-,-] \\
60 & 
[0,-,-,+,+,+,-,+,+,-,+,+,+,-,-],~[+,+,-,-,-,-,+,-,-,+,-,-, \\&
-,-,+],~[0,+,-,+,-,+,+,-,-,+,+,-,+,-,+], \\&
[+,-,+,+,-,-,-,-,-,-,-,-,+,+,-] \\
68 & 
[0,-,+,+,+,-,+,-,-,-,-,+,-,+,+,+,-],~[-,+,-,-,+,-,+,+,-,-, \\&
+,+,-,+,-,-,+],~[0,+,+,+,+,+,-,-,+,+,-,-,+,+,+,+,+], \\&
[+,+,+,-,+,-,-,-,+,+,-,-,-,+,-,+,+] \\
76 & 
[0,+,+,+,+,-,+,-,-,+,+,-,-,+,-,+,+,+,+],~[-,+,+,+,-,-,-,+, \\&
-,+,+,-,+,-,-,-,+,+,+],~[0,+,+,+,-,+,-,-,+,+,+,+,-,-,+,-, \\&
+,+,+],~[-,+,-,-,-,+,+,-,+,+,+,+,-,+,+,-,-,-,+] \\
84 & 
[0,-,-,+,+,-,-,+,-,+,-,-,+,-,+,-,-,+,+,-,-], \\&
[-,-,+,+,+,-,+,-,-,-,-,-,-,-,-,+,-,+,+,+,-], \\&
[0,+,-,+,-,-,+,+,-,-,-,-,-,-,+,+,-,-,+,-,+], \\&
[+,-,+,+,+,+,-,+,+,-,-,-,-,+,+,-,+,+,+,+,-] \\
\end{array} 
$$

\end{document}